\newtheorem{theorem}{Theorem}[section]
\newtheorem{thm}[theorem]{Theorem}
\newtheorem{remark}[theorem]{Remark}
\newtheorem{claim}[theorem]{Claim}
\newtheorem{convention}[theorem]{Convention}
\newtheorem{lemma}[theorem]{Lemma}
\newtheorem{cor}[theorem]{Corollary}
\theoremstyle{definition}
\newtheorem{definition}[theorem]{Definition}
\newcommand{\NN}{{\mathbb{N}}}
\newcommand{\RR}{{\mathbb{R}}}
\newcommand{\QQ}{{\mathbb{Q}}}
\newcommand{\ZZ}{{\mathbb{Z}}}
\newcommand{\sub}{\subseteq}
\newcommand{\sN}[1]{_{#1\in \omega}}
\newcommand{\uhr}[1]{\! \upharpoonright_{#1}}
\newcommand{\bi}{\begin{itemize}}
\newcommand{\ei}{\end{itemize}}
\newcommand{\bc}{\begin{center}}
\newcommand{\ec}{\end{center}}
\newcommand{\ES}{\emptyset}
\newcommand{\ex}{\exists}
\newcommand{\fa}{\forall}
\newcommand{\la}{\langle}
\newcommand{\ra}{\rangle}
\newcommand{\n}{\noindent}
\newcommand{\sss}{\sigma}
\newcommand{\aaa}{\alpha}
\newcommand{\lland}{\, \land \, }
\newcommand \seq[1]{{\left\langle{#1}\right\rangle}}
\newcommand\+[1]{\mathcal{#1}}
\newcommand{\ol}{\overline}
\newcommand{\lra}{\leftrightarrow}
\newcommand{\LR}{\Leftrightarrow}
\newcommand{\dom}{\ensuremath{\mathrm{dom}}}
\newcommand{\Sinf}{S_\infty}
\renewcommand{\hat}{\widehat}
 \newcommand{\pp}[3]{\prod_{#2 \in #3}   {x_#2}^{{#1}_#2}} 
 \newcommand{\si}[1]{ \{ #1 \}}
\begin{document}

%
 
\title{The complexity of topological group isomorphism}
  \author{Alexander S.\  Kechris,  Andr\'e Nies and Katrin Tent}

\thanks{The first author was partially supported by NSF grant DMS 1464475. The second author was partially supported by the Marsden fund of New Zealand. The third author was supported by Sonderforschungsbereich 878 at Universit\"at M\"unster.}

\noindent  \address{A.\ S.\ Kechris, Department of Mathematics, Caltech, Pasadena CA 91125, \texttt{kechris@caltech.edu}}
\address{
A.\  Nies, Department of Computer Science,  Private Bag 92019, The University of Auckland, \texttt{andre@cs.auckland.ac.nz}}

\address{K.\ Tent, Mathematisches Institut, Einsteinstrasse 62, Universit\"at M\"unster, 48149 M\"unster}
\maketitle

\begin{abstract} We study the complexity of the topological  isomorphism relation for various classes of   closed subgroups of the group of permutations of the natural numbers. We use the setting of Borel reducibility between equivalence relations on Borel  spaces. For profinite,  locally compact, and Roelcke precompact groups, we show that  the  complexity is the same as the one of countable graph isomorphism. For oligomorphic groups, we merely establish  this as an upper bound. 


  \end{abstract}

 \section{Introduction}
 Let $\Sinf$ denote the Polish group  of permutations of $\omega$. It is well-known that the closed subgroups of $\Sinf$ (or equivalently, the non-Archimedean Polish groups) are,  up to topological  group isomorphism,  the automorphism groups of countable structures. Algebra or  model theory can sometimes be used to understand natural classes of closed subgroups of $\Sinf$.  Firstly, the separable profinite groups are precisely the Galois groups of Galois extensions of countable fields. For a second example, consider   the oligomorphic groups, namely the closed subgroups of $\Sinf$  such that for each $n$ there are only finitely many $n$-orbits. These groups   are precisely   the automorphism groups of  $\omega$-categorical structures with domain the natural numbers. Under this correspondence,  topological isomorphism turns into bi--interpretability of the structures  by a result in  Ahlbrandt and Ziegler~\cite{Ahlbrandt.Ziegler:86}  going back  to unpublished work of  Coquand.

 The closed subgroups of $\Sinf$ form the points of a standard Borel space.  Our main goal is to  determine  the complexity of the topological isomorphism relation for various classes of closed subgroups of $\Sinf$ within  the setting of Borel reducibility between equivalence relations. See \cite{Gao:09} for background on this setting. 
 
 A leading  question about  an equivalence relation $E$ on a standard Borel space $X$ is whether $E$ is classifiable by countable structures. This   means that one can in a Borel way assign to $x \in X$ a countable structure $M_x$  in a fixed countable language so that $xEy \LR M_x \cong M_y$. Among the closed subgroups of $\Sinf$, we will consider the Borel classes of compact (i.e., profinite) groups, locally compact groups, and oligomorphic groups.  We will  include the class of Roelcke precompact groups, which generalise both the compact and the oligomorphic groups.

 We introduce a general criterion on a class of closed subgroups of $\Sinf$  to show that  each of the classes above has an  isomorphism relation that is classifiable by countable structures.  Our proof that the criterion works has  two different versions. The first version is   on the descriptive set theoretical side: we Borel reduce the  isomorphism relation to conjugacy of closed subgroups of $\Sinf$, which implies classifiability by countable structures using a   result  of Becker and Kechris~\cite[Thm.\ 2.7.3]{Becker.Kechris:96}.  The second version  is on   the model theoretic side:  from a  group  $G$ in the class we directly construct a countable structure $M_G$ in a fixed finite language  so that topological isomorphism of two groups is equivalent to isomorphism of the associated structures.  
 
Independently from us, Rosendal and Zielinski \cite[Prop.\ 10 and 11]{Rosendal.Zielinski:arXiv}  established classifiability by countable structures for the isomorphism relation in the four classes above,    and posted their result on arXiv in Oct.\ 2016. Their methods are   different from ours: they obtain the results as corollaries to their theory of classification by  compact metric structures under homeomorphism. 
 
 We conversely provide a Borel reduction of  graph   isomorphism to isomorphism of profinite groups, using an extension of an argument by Mekler \cite{Mekler:81} within the framework of topological groups. In fact, for   $p $ an odd prime, the class of exponent $p$, nilpotent of class 2,  profinite groups  suffices. 
 
 For isomorphism of oligomorphic groups, it is clear that the identity on $\mathbb R$ is a lower bound (e.g.\ using Henson digraphs); we leave open the question whether this lower bound can be improved.  
 
 Recent work with Schlicht shows that   the bi-interpretability relation  for $\omega$-categorical structures is Borel.  Since graph isomorphism is not Borel, this  upper bound   for the  isomorphism relation of the corresponding automorphism groups is not sharp.  Also note that bi-interpretability of structures is equivalent to bi-interpretability of their theories (suitably defined). If the signature is finite then the Borel equivalence relation of bi-interpretability of theories over that signature   has countable equivalence classes.

Using Lemma~\ref{lem:dense sequence} below,  it is not hard to verify that the  isomorphism relation  for general closed subgroups of $\Sinf$ is analytic. It is unknown what the exact complexity of this relation is in terms of Borel reducibility.  By our result involving profinite groups, graph isomorphism is a lower bound.

 \section{Preliminaries} \label{s:prelim}
\subsection*{Effros structure of  a Polish space} Given a Polish space $X$, let $\+ F(X)$ denote  the set of closed subsets of $X$. The \emph{Effros structure} on $X$  is the Borel space consisting of  $\+ F(X)$  together with  the $\sigma$-algebra   generated by the sets \bc $\+ C_U = \{ D \in \+ F(X) \colon D \cap U \neq \ES\}$,  \ec for open $U \sub X$. Clearly it suffices to take all the sets $U$ in a countable basis $\seq{U_i}\sN i$ of $X$.  The  inclusion relation  on $\+ F(X)$ is Borel because for $C, D \in 
\+ F(X)$ we have $C \sub D \lra \fa i \in \NN \,  [ C \cap U_i \neq \ES \to D \cap U_i \neq \ES]$.

The following fact will be used frequently. 
  \begin{lemma}[see  \cite{Kechris:95}, Thm.\ 12.13] \label{lem:dense sequence} Given a   Polish space $X$, there is a Borel map $f: \+ F(X)\longrightarrow X^\omega$ such that for a non-empty set  $G \in \+  F(X)$, the image $f(G)$ is a sequence $(p^G_i) \sN i$  in $X^\omega$ that is dense in $G$.   \end{lemma}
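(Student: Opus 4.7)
The plan is to fix a compatible complete metric $d$ on $X$ and a countable basis $(U_i)_{i \in \omega}$ of open sets with $U_0 = X$, and then, for each index $i$, select in a Borel way from $G$ a point $p^G_i$ lying in $G \cap U_i$ whenever this intersection is non-empty. The resulting sequence will automatically be dense: if $x \in G$ and $V$ is an open neighborhood of $x$, pick some $U_i \ni x$ with $U_i \subseteq V$; then $G \cap U_i \neq \emptyset$, so $p^G_i \in G \cap U_i \subseteq V$.

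To select a point of $G \cap U_i$, I would build, recursively in $k$, a nested sequence of basic open sets $W_0 \supseteq W_1 \supseteq \cdots$ satisfying $W_0 = U_i$, $\mathrm{diam}(W_k) < 2^{-k}$ for $k \geq 1$, $\overline{W_{k+1}} \subseteq W_k$, and $W_k \cap G \neq \emptyset$. Given $W_k$, I would search through the basis for the least index $m$ such that $V_m$ has diameter $< 2^{-(k+1)}$, $\overline{V_m} \subseteq W_k$, and $V_m \cap G \neq \emptyset$, then set $W_{k+1} = V_m$. Such an $m$ exists because if $x \in G \cap W_k$, there is a basic open set around $x$ of small diameter whose closure sits inside $W_k$. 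Completeness of $d$ and the diameter condition then give a unique point in $\bigcap_k \overline{W_k}$; closedness of $G$ and $W_k \cap G \neq \emptyset$ for all $k$ force this point into $G \cap U_i$, and I define $p^G_i$ to be this point. If $G \cap U_i = \emptyset$, the construction halts and I set $p^G_i := p^G_0$ as a default (note $G \cap U_0 = G \neq \emptyset$ by hypothesis, so $p^G_0$ is always defined).

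The key point to check is Borelness of $G \mapsto p^G_i$. The successive choices of $W_{k+1}$ depend on $G$ only through the conditions ``$V_m \cap G \neq \emptyset$,'' which define the basic Borel sets $\+C_{V_m}$ of the Effros structure. Since at each stage we minimise $m$ over $\omega$ subject to a Boolean combination of such Borel conditions (together with the purely topological conditions on $V_m$, which are independent of $G$), the function $G \mapsto (W_k)_{k \in \omega}$ is Borel into $\omega^\omega$. Finally, $p^G_i = \lim_k c_k$, where $c_k$ is any canonical point of $W_k$ (say the one obtained from a fixed enumeration of a countable dense subset of $X$); this limit is a Borel function of $(W_k)_k$, and evaluating at the basic open set $U_i$ for each $i$ assembles the Borel map $f : \+F(X) \to X^\omega$.

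The only genuine obstacle is verifying that the recursive search always succeeds when $G \cap U_i \neq \emptyset$; this reduces to observing that in a Polish space any point of an open set has a basic open neighborhood of prescribed small diameter whose closure stays inside that open set, which follows by choosing the basis so that for every $x$ and $\varepsilon>0$ there is a basis element $V$ with $x \in V$, $\mathrm{diam}(V) < \varepsilon$, and $\overline{V}$ contained in any prescribed neighborhood of $x$ (e.g.\ take the basis of open $d$-balls with rational radii centered at a countable dense set, and refine). Everything else is routine.
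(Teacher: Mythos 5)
The paper does not actually prove this lemma; it is quoted as a known result with a citation to Kechris, \emph{Classical Descriptive Set Theory}, Thm.\ 12.13. Your argument is a correct self-contained proof, and it is essentially the standard selection-scheme argument behind that citation (Kuratowski--Ryll-Nardzewski style): for each basic open $U_i$ meeting $G$, shrink through basis elements of vanishing diameter whose closures nest and which meet $G$, use completeness to extract the unique limit point, and observe that all the $G$-dependence sits in conditions of the form $V_m \cap G \neq \emptyset$, i.e.\ in the generators $\mathcal{C}_{V_m}$ of the Effros Borel structure, so the resulting selectors are Borel. The only loose ends are cosmetic: $f$ must be defined on all of $\mathcal{F}(X)$, so you should fix an arbitrary constant value (say a constant sequence) when $G = \emptyset$, since your default $p^G_i := p^G_0$ only makes sense for non-empty $G$; and one should fix once and for all a basis (e.g.\ rational-radius balls around a countable dense set) for which your refinement property --- every point of an open set has a basis neighbourhood of prescribed small diameter with closure inside that set --- holds, exactly as you indicate at the end. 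Neither point affects the correctness of the argument.
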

\subsection*{The Effros structure of $\Sinf$} For a Polish group $G$, we have a Borel action $G \curvearrowright \+ F(G)$ given by left translation.  In this paper we will only consider the case that $G= \Sinf$.
  In the following $\sss, \tau, \rho$ will denote injective maps on  initial segments of the integers, that is, on tuples of integers without repetitions. Let $[\sss]$  denote the  set of permutations extending~$\sss$:  \bc $\+ [\sss] = \{ f \in \Sinf \colon \sigma \prec f\}$  \ecÊ(this is often denoted  $\+ N_\sss$ in the literature).  The sets $[\sss]$ form a base for the topology of pointwise convergence of $\Sinf$.  For $f \in \Sinf$ let $f \uhr n$ be the initial segment of $f$ of length $n$. Note that the $[f\uhr n]$ form a basis of neighbourhoods of $f$. 
Given  $\sss, \sigma'$ let $\sigma' \circ \sigma$ be the composition as far as it is defined;  for instance, $(7,4,3,1,0 ) \circ (3,4 ,6 ) = (1,0)$. Similarly, let $\sss^{-1}$ be the inverse of $\sss$ as far as it is defined.

\begin{definition} For $n \ge 0$, let  $\tau_n$ denote  the  function $\tau$ defined on $\{0,\ldots, n\}$ such that $\tau(i) = i$ for each $i \le n$. \end{definition} 
\begin{definition} For $P \in \+ F(\Sinf)$,  by $T_P$ we denote the tree describing $P$ as a closed set in the sense that $[T_P] \cap \Sinf = P$. Note that  $T_P = \{ \sss \colon \, P \in \+ C_{[\sss]}\}$. \end{definition}

\begin{lemma}  \label{lem:mult} The   relation $\{(A, B, C ) \colon AB \sub C\}$ on $\+ F(\Sinf)$ is Borel. \end{lemma}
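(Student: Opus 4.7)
The plan is to reduce the inclusion $AB \sub C$ to a countable conjunction of Borel conditions, each checking membership in $C$ of a single permutation that depends Borel-measurably on $A$ and $B$. First I would dispose of the degenerate case: if $A = \ES$ or $B = \ES$, the inclusion holds vacuously, and emptiness of a set in $\+ F(\Sinf)$ is a Borel condition since $A = \ES \lra \fa i \, A \notin \+ C_{U_i}$ for a countable basis $\seq{U_i}\sN i$ of $\Sinf$.

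For non-empty $A$ and $B$, I would apply Lemma~\ref{lem:dense sequence} to obtain Borel maps $A \mapsto (p_i^A)\sN i$ and $B \mapsto (q_j^B)\sN j$ whose images are dense in $A$ and $B$ respectively. Since multiplication in $\Sinf$ is jointly continuous, the countable set $\{p_i^A q_j^B : i, j \in \omega\}$ is dense in $AB$: given any $(a, b) \in A \times B$, one can choose $i_k, j_k$ with $p_{i_k}^A \to a$ and $q_{j_k}^B \to b$, and then $p_{i_k}^A q_{j_k}^B \to ab$. Because $C$ is closed, this yields the equivalence
\[
AB \sub C \quad \lra \quad \fa i, j \in \omega \; : \; p_i^A q_j^B \in C.
\]

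It remains to verify that each condition $p_i^A q_j^B \in C$ is Borel in $(A, B, C)$. The map $(A, B) \mapsto p_i^A q_j^B$ is Borel, by composing the Borel evaluations supplied by Lemma~\ref{lem:dense sequence} with continuous multiplication. And the membership set $\{(h, D) \in \Sinf \times \+ F(\Sinf) : h \in D\}$ is Borel because $h \in D \lra \fa n \, D \in \+ C_{[h \uhr n]}$, and for each fixed $n$ the right-hand condition is a countable union over possible values $\sss$ of $h \uhr n$ of rectangles of the form $\{h : h \uhr n = \sss\} \times \+ C_{[\sss]}$. The relation in the lemma is thus a countable conjunction of Borel conditions, hence Borel. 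I do not anticipate a major obstacle; the one point that requires care is the density of $\{p_i^A q_j^B : i,j \in \omega\}$ in $AB$, which relies on joint continuity of multiplication in the Polish group $\Sinf$.
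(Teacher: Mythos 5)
Your proof is correct, but it takes a genuinely different route from the paper. The paper never invokes the dense-sequence selection lemma here: it works directly with the trees $T_A, T_B, T_C$ (where $\sss \in T_P \lra P \in \+ C_{[\sss]}$) and characterizes $AB \sub C$ by the purely combinatorial condition $\fa \beta \in T_B \, \fa \alpha \in T_A \, [\, |\alpha| > \max \beta \to \alpha \circ \beta \in T_C\,]$, the nontrivial direction being verified exactly as in your closedness argument but at the level of finite strings ($\beta = g \uhr n$, $\alpha = f \uhr {1+\max\beta}$). That version is manifestly a countable conjunction of conditions built from the generators $\+ C_{[\sss]}$ of the Effros structure, so it needs no selector and exhibits the relation at a low level of the Borel hierarchy; on the other hand it is tailored to $\Sinf$, since it relies on composing finite partial injections. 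Your argument --- dense sequences in $A$ and $B$ via Lemma~\ref{lem:dense sequence}, joint continuity of multiplication to get a countable dense subset of $AB$, closedness of $C$, and Borelness of the membership relation $\{(h,D) \colon h \in D\}$ --- is slightly heavier machinery (it uses the selection theorem and requires the separate treatment of $A = \ES$ or $B = \ES$, which you correctly include, since the selector's output is meaningless on the empty set), but it has the advantage of working verbatim in $\+ F(G)$ for an arbitrary Polish group $G$, not just $\Sinf$. Both proofs are complete and correct.
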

\begin{proof} $AB \sub C$ is equivalent to  the Borel condition

\bc $\fa \beta \in T_B \fa \alpha \in T_A  \, [  |\aaa| > \max \beta \to  \aaa \circ \beta \in T_C]$. \ec 
For the nontrivial implication, suppose the condition holds. Given $f \in A, g\in B$ and $n \in \NN$, let $\beta = g \uhr n$, and $\alpha = f \uhr {1+ \max \beta}$. Since $\alpha \circ \beta \in T_C$, the neighbourhood $[f\circ g \uhr n]$ intersects $C$. As $C$ is closed and  $n$ was arbitrary, we conclude that $f \circ g \in C$.
\end{proof}

\subsection*{The Borel space of non-Archimedean groups} 
\begin{lemma} The closed subgroups of $\Sinf$ form a Borel set  $\+ U(\Sinf)$ in $\+ F(\Sinf)$. \end{lemma}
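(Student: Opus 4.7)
The plan is to characterize closed subgroups of $\Sinf$ via three Borel conditions on $G \in \+ F(\Sinf)$: nonemptiness, $GG \sub G$, and $G^{-1} \sub G$. A nonempty closed set satisfying the latter two is automatically a subgroup, containing the identity, since any $g \in G$ yields $g^{-1} \in G$ and hence $e = gg^{-1} \in G$.

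Nonemptiness, $G \ne \ES$, is just $G \in \+ C_{\Sinf}$, which is a generator of the Effros $\sigma$-algebra. Closure under multiplication, $GG \sub G$, is Borel directly from Lemma~\ref{lem:mult} with $A = B = C = G$.

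The main step is showing that $G^{-1} \sub G$ is Borel. I plan to re-express it as a countable conjunction of conditions on the tree $T_G$. Specifically, I would verify that $G^{-1} \sub G$ holds if and only if
\[
\forall n \in \NN\; \forall \sigma \in T_G \text{ with } \{0,\ldots,n-1\}\sub \range(\sigma) \colon\; \sigma^{-1}\uh n \in T_G.
\]
The forward direction is easy: if $\sigma \in T_G$ and $\{0,\ldots,n-1\}\sub \range(\sigma)$, pick $f \in G$ with $\sigma \prec f$; then $f^{-1} \in G$, and $f^{-1}\uh n = \sigma^{-1}\uh n$ because $\sigma^{-1}$ already determines the first $n$ values of $f^{-1}$. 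Hence $\sigma^{-1}\uh n \in T_G$. For the converse, given $f \in G$ and $n$, choose $m$ large enough that $\{0,\ldots,n-1\}\sub f(\{0,\ldots,m-1\})$; the string $\sigma = f\uh m$ lies in $T_G$ and satisfies $\{0,\ldots,n-1\}\sub \range(\sigma)$, so by hypothesis $\sigma^{-1}\uh n = f^{-1}\uh n \in T_G$. Since this holds for all $n$ and $f \in G$ was arbitrary, $f^{-1} \in G$, giving $G^{-1} \sub G$.

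Each atomic statement $\sigma \in T_G$ or $\sigma^{-1}\uh n \in T_G$ appearing in the displayed formula has the form $G \in \+ C_{[\tau]}$ for a specific string $\tau$ not depending on $G$, and so is Borel in $G$. Hence the displayed condition exhibits $\{G : G^{-1} \sub G\}$ as a countable intersection of Borel sets. Combining the three Borel conditions shows that $\+ U(\Sinf)$ is Borel in $\+ F(\Sinf)$. The main subtlety was the inverse condition, which the tree description handles neatly without needing the dense sequence of Lemma~\ref{lem:dense sequence}.
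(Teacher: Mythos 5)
Your proof is correct and takes essentially the same route as the paper: both reduce being a closed subgroup to countably many conditions of the form $G \in \+ C_{[\sigma]} \to G \in \+ C_{[\tau]}$ on the tree $T_G$, using closedness of $G$ to recover the genuine algebraic closure properties from the tree conditions. The only cosmetic differences are that you invoke Lemma~\ref{lem:mult} for closure under products and replace the paper's explicit identity-neighbourhood condition by nonemptiness together with the two closure conditions, while your inverse criterion is just the paper's condition ``$G \in \+ C_{[\sigma]} \to G \in \+ C_{[\sigma^{-1}]}$'' written out with explicit indices.
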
 

\begin{proof} $D \in \+ F(\Sinf)$ is a subgroup iff     the following three conditions hold:

\bi \item $D \in \+ C_{[(0, 1, \ldots, n-1 )]}$  for each $n$
\item $D \in \+ C_{[\sss]}  \to D \in \+ C_{[\sss^{-1}]}$  for all $\sss$
\item $D \in \+ C_{[\sss]}  \cap C_{[\tau]}  \to D \in \+ C_{[\tau \circ \sss]}$ for all     $\sss, \tau$. \ei
It now suffices to observe  that all three conditions are Borel. 
\end{proof}

Note that $\+ U(\Sinf)$ is a standard Borel space.
 The statement of the  lemma  actually holds for each Polish group in place of $\Sinf$.
  

\subsection*{Locally compact closed subgroups of $S_\infty$} These groups are exactly the (separable)  totally disconnected  locally compact   groups. The  class of such groups has been widely studied. A set $D \in \+ F(\Sinf)$ is   compact iff the tree $T_D= \{ \sss \colon D\in \+ C_{[\sss]}\}$ is finite at each level. 
A closed subgroup $G$ of $ \Sinf$ is locally compact iff some point in $G$ has a compact neighbourhood. Equivalently,  there is $\tau $ such that $G \in  \+ C_{[\tau]}$ and the tree  $\{ \sss \succeq \tau \colon G\in \+ C_{[\sss]}\}$ is finite at each level.   Thus, compactness and local compactness of subgroups are   Borel conditions in $\+ F(\Sinf)$.

\subsection*{The canonical structure for a  closed subgroup of $\Sinf$} Given $G \in \+ U(\Sinf)$ we can in a Borel   way obtain a countable structure $M_G$ in a countable signature such that $G  \cong \mathrm{Aut}(M_G)$. For each $n$,  order the $n$-tuples lexicographically. Let $\seq {\ol a_i}_{i< k_n}$, where $k_n \le \omega$,  be  the ascending  list of the $n$-tuples that are  least in their orbits. The signature has $n$-ary predicate symbols $P^n_i$ for $i< k_n$, where the symbol $P^n_i$ is  interpreted as the $G$-orbit of $\ol a_i$.


   \section{A sufficient criterion for classifiability of  \\ topological isomorphism   by countable structures}

 We show that the isomorphism relation on a  Borel class $\+ V$ of subgroups   of $S_\infty$  that is invariant under conjugacy   is classifiable by countable structures,   provided that  we can canonically assign to each $G \in \+ V$ a countable base of neighbourhoods of $1$ that are open subgroups. 
   For instance,  in the case of locally compact groups $G$, we can  take as $\+ N_G$ the compact open subgroups of~$G$. 
 This relies on    the classic result  of  
 van Dantzig: if a  totally disconnected  group $G$  is locally compact, it has a compact open subgroup  $U$. Note that $U$ is profinite, so in fact   the identity element of $G$  has a  basis of neighbourhoods consisting  of compact open subgroups.

\subsection{The sufficient criterion}
 \begin{thm} \label{thm:general criterion} Let $\+ V $ be a Borel set of subgroups of $S_\infty$ that is  closed under conjugation. 
  
  Suppose that  for each $G \in \+ V$ we have a  countably infinite  set $\+ N_G$ of open subgroups of $G$ that forms a neighbourhood basis of $1$. 
  Suppose further that the relation 
  \[\+ T = \{ (G, U) \colon  \, G \in \+ V\mbox{ and } U \in \+ N_G \} \]
  is Borel, as well as isomorphism   invariant in the sense that \bc $ \phi \colon G \cong H$ implies $V \in \+ N_G \lra \phi(V) \in \+ N_H $. \ec
  Then:
  \begin{itemize}  \item[(i)] The isomorphism relation on $\+ V$  is Borel reducible to conjugacy  of closed subgroups of $S_\infty$. (Moreover, a  conjugating permutation can be obtained in a Borel way from an isomorphism, and vice versa.) 
  \item[(ii)] In particular, the  isomorphism relation on $\+ V$ is classifiable by countable structures.  \end{itemize}
  \end{thm}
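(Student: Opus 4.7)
The plan is to Borel-construct, from each $G \in \+ V$, a closed subgroup $H_G \le \Sinf$ that is topologically isomorphic to $G$, in such a way that $G \cong G'$ iff $H_G$ and $H_{G'}$ are $\Sinf$-conjugate. Part (ii) will then follow from the Becker--Kechris theorem cited in the introduction, which says that conjugacy of closed subgroups of $\Sinf$ is classifiable by countable structures.

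My first step is to Borel-enumerate $\+ N_G$ as a sequence $(U_n^G)\sN n$. Since $\+ T$ is Borel with countable vertical sections, the Lusin--Novikov uniformization theorem provides such an enumeration. Next, using Lemma~\ref{lem:dense sequence} I pick a Borel dense sequence $(p_i^G)\sN i$ in $G$ and use it to Borel-enumerate each countable coset space $G/U_n^G$, scanning the $p_i^G$ in order and recording each new coset as it appears. This gives a Borel bijection of the disjoint union $X_G := \bigsqcup_n G/U_n^G$ with $\w$. Left translation of $G$ on $X_G$ then defines a continuous homomorphism $\phi_G \colon G \to \mathrm{Sym}(X_G) \cong \Sinf$. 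Since $\+ N_G$ is a neighbourhood basis of $1$, $\bigcap_n U_n^G = \{1\}$; as the kernel of $\phi_G$ is contained in this intersection, $\phi_G$ is injective. Moreover the stabiliser in $\phi_G(G)$ of the coset $U_n^G \in X_G$ equals $\phi_G(U_n^G)$, so $\phi_G$ carries the basis $\+ N_G$ to a neighbourhood basis of $1$ in $\phi_G(G)$, making $\phi_G$ a topological embedding. Its image $H_G := \phi_G(G)$ is therefore a Polish subgroup of $\Sinf$, hence closed (a standard fact about Polish subgroups of Polish groups). The map $G \mapsto H_G$ is Borel, since $(\phi_G(p_i^G))\sN i$ is a Borel dense sequence in $H_G$.

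It remains to establish the equivalence $G \cong G' \LR H_G \sim_{\mathrm{conj}} H_{G'}$. The backward implication is immediate since conjugation in $\Sinf$ is a topological isomorphism. For the forward direction, given an isomorphism $\psi \colon G \to G'$, the invariance hypothesis gives $\psi(\+ N_G) = \+ N_{G'}$, and $\psi$ therefore induces a $\psi$-equivariant bijection $X_G \to X_{G'}$. Composed with the chosen bijections of $X_G$ and $X_{G'}$ with $\w$, this yields a $\pi \in \Sinf$ with $\pi H_G \pi^{-1} = H_{G'}$. Conversely, from such a $\pi$ we recover an isomorphism $\psi = \phi_{G'}^{-1} \circ \mathrm{conj}_\pi \circ \phi_G$, and both directions of this correspondence can be carried out in a Borel way, yielding the parenthetical ``moreover'' clause.

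The main technical obstacle is keeping the whole construction verifiably Borel in $G$. The delicate part is the uniform-in-$(G,n)$ Borel enumeration of the coset spaces $G/U_n^G$, which is where Lemma~\ref{lem:dense sequence} is essential; once the combinatorics of the enumeration are in place, the Borel nature of $\phi_G$, of $H_G \in \+ F(\Sinf)$, and of the extraction of $\pi$ from $\psi$ all follow from routine verifications along the lines of Lemma~\ref{lem:mult}.
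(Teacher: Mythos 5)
Your proposal is correct and follows essentially the same route as the paper: both encode $G$ via its left-translation action on the countable set of left cosets of the subgroups in $\+ N_G$ (your disjoint union $\bigsqcup_n G/U_n^G$ is canonically the paper's $\+ L(G)$), verify this is a topological embedding with closed Polish image, use Lusin--Novikov and the dense-sequence lemma for Borelness, transport an isomorphism to a conjugating permutation via the induced coset bijection, and invoke the Becker--Kechris theorem for (ii). The only differences are cosmetic (your explicit scanning enumeration of cosets versus a second application of Lusin--Novikov, and a slightly compressed check of the continuity of $\phi_G$ at arbitrary cosets $rU$, whose stabilizers are the open sets $rUr^{-1}$).
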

  
  \begin{proof}   
We assign in a Borel way to each $G \in \+ V$ a closed subgroup $\hat G$ of $S_\infty$ so that  $ G \cong H$ iff $\hat G $ is conjugate to $\hat H$.   Moreover, we ensure that  $G \cong \hat G$.

For $G \in \+ V$, let   $\+L(G)$ denote the set of left  cosets of the subgroups in $\+ N_G$.  The relation
  \[\+ S = \{ ((G, B ) ,  U ) \colon  \, G \in \+ V , U \in \+ N_G \mbox{ and } \ex i \, B= p_i^G U \} \]
 is Borel and has countable sections on the second component because each $\+ N_G$ is countable. By  a result of Lusin-Novikov,  in the version of  \cite[18.10]{Kechris:95},  the  projection of $\+ S$ onto the first component is Borel. This projection  equals $ \{ (G, B ) \colon  \, G \in \+ V \land B \in \+ L(G) \}$.  Note also that $ \+ L(G) $ is countably infinite.  So by the same result \cite[18.10]{Kechris:95}  there is a Borel function $F \colon \+ V \to \+ F(\Sinf)^\omega$ taking a group $G \in \+ V$ to a bijection $ \eta_G\colon \omega \to  \+ L(G)$.

 The group $G$ acts by left translation on $\+ L(G)$. So  via the  bijection $\eta_G$, the  left  action of an element  $g$ on $\+ L(G)$ corresponds    to a permutation $\Theta_G(g) \in \Sinf$.    We let $\hat G$ be the range of  $ \Theta_G$ and verify that the  map $G \to \hat G$ is Borel and has the  desired property. 
 We will omit the subscript $G$ for now and simply write $\Theta$.

\begin{claim} The map $\Theta$ is a topological group isomorphism $G \cong \Theta(G)$.  \end{claim}
Clearly $\Theta$ is a homomorphism of groups. To show that    $\Theta$ is 1-1 suppose that $g \neq 1$ and pick $V \in \+ N_G$ such that $g \not \in V$. Then $gV \neq V$, hence $\Theta(g) (\eta^{-1}_G(V)) \neq \eta^{-1}_G(V)$, so that $\Theta(g) \neq 1$.

For  continuity of $\Theta^{-1}$ at $1$, suppose $U\in \+ N_G$, and let  $n= \eta_G(U)$. If $\Theta(g)(n) = n$ then $g \in U$.

For continuity of $\Theta$ at $1$, consider $C \in \+ L(G)$ and let $n= \eta_G(C)$. If $gC = C$ then $\Theta(g)(n) = n$. So it suffices  to find a neighbourhood $W$ of $1$ in $G$ such that $g \in W \to gC= C$.  Choose $r \in G$ and $U \in \+ N_G$ such that $C = rU$. The desired neighbourhood $W$ is $\{ g \colon \, r^{-1} g r \in U \}  =  rU r^{-1}$. This shows the claim.

 Since $\Theta(G)$ is Polish it is  a $G_\delta$ subgroup of  $\Sinf$, and hence closed by the Baire category theorem \cite[1.2.1]{Becker.Kechris:96}. 

   \begin{claim} The map $L \colon \, \+ V \to \+ F(S_\infty)$ sending $G $ to $  \hat G$ is Borel. \end{claim} 
We use that  the action by left translation   $G \curvearrowright  \+ F(S_\infty)$  is  Borel,  and the assignment  $G \to \eta_G$ is Borel. In  the notation of  Section~\ref{s:prelim}, we have to show that the preimage of $\+ L_{[\sss]}$ under $L$ is Borel for each tuple  $\sss$ without repetitions. This preimage equals $\{ G \colon \hat G \cap [\sss]\neq \ES\}$, which in turn  equals $\{ G \colon \exists g \in  G \, \Theta_G(g) \in [\sss]\}$.   For each $G$ the set of such $g$ is open in $G$ because $\Theta_G$ is continuous. So if this set is nonempty it contains $p_i^G$ for some $i\in \omega$, where $p_i^G$ is given by Lemma~\ref{lem:dense sequence}.  
 To say that $\Theta_G(p_i^G) \in [\sss]$ means that \bc  $\fa n, k \,  [ \sss(n) =k  \to  p_i^G  \eta_G(n)  = \eta_G(k) ]$,  \ec 
 which is a Borel property of $G$. The  preimage of $\+ L_{[\sss]}$ under $L$ equals the union of these sets over all $i$, which is therefore Borel.  This shows the claim.

Suppose now that $G , H \in \+ V$. First let $\phi \colon G \to H$ be a topological group isomorphism. By our hypothesis that  the relation  $\+ T$ is isomorphism invariant, $\phi$ induces a bijection $\Phi \colon \+ L(G) \to \+ L(H)$ via $\Phi(rU) = \phi(r) \phi(U)$ for $U \in \+ N_G$. Then $\Theta_G(G)$ is conjugate to $\Theta_H(H)$ via the permutation $\alpha = \eta_H^{-1} \circ \Phi \circ \eta_G$, because for $p = \Theta_G(g)$ we have $\alpha\circ p \circ \alpha^{-1} = \Theta_H(\phi(g))$. 

Next suppose that  $\Theta_G(G)^\alpha = \Theta_H(H)$ for $\alpha \in \Sinf$. Then  $\phi= \Theta_H^{-1} \circ \alpha \circ \Theta_G$ is a topological group isomorphism of $G$ and $H$.

%

%
%
%
%
Note that one can  obtain $\alpha $ from $\phi$ in a Borel way, and vice versa.  This shows (i).
For (ii), recall that any $\Sinf$-orbit equivalence relation is classifiable by countable structures by a result of Becker and Kechris~\cite[Thm.\ 2.7.3]{Becker.Kechris:96}; also see   \cite[Thm.\ 3.6.1]{Gao:09}. 
  \end{proof}
 
 We now apply the criterion given by the foregoing theorem to various classes of groups. We have already mentioned how to  obtain the case of locally compact groups. 
 
  A closed subgroup $G$ of $S_\infty$ is  called \emph{oligomorphic} if  for each  $n$ there are only finitely many $n$-orbits for the natural action of $G$ on $\omega$. Clearly this property is Borel.
Each open subgroup of $G$ contains an open subgroup  $[\tau_n ] \cap G$ for some $n$, where $\tau_n$ denotes the identity   tuple  $(0,1, \ldots, n)$. Since $[\tau_n ] \cap G$ has only finitely many cosets in $G$, there are only countably many open subgroups. 
   
A topological  group $G$ is called  \emph{Roelcke precompact} if for each   neighbourhood $U$ of $1$, there is a finite set $F$  such that $UFU = G$. If  $G$ is a closed subgroup of $S_\infty$,  we may assume that  $U$ is an open subgroup of $G$, in which case the defining condition states that the double coset equivalence relation $\sim_U $ given by $x \sim_U y \lra  UxU = UyU$ has only finitely many equivalence classes.  

 Roelcke pre-compactness is   a Borel property of subgroups of $S_\infty$: given an open subgroup $U$, if $F$ as above exists we can choose the elements of $F$ among the dense sequence $p_i^G$ obtained in a Borel way as in Lemma~\ref{lem:dense sequence}. It suffices to require that $F$ exists for each  $U = [\tau_n] \cap G$.

 We note that a  Roelcke precompact non-Archimedean group $G$  has only  countably  many open subgroups: if $U$ is an open subgroup, then each  $\sim_U$ class is a finite union of $\sim_{[\tau_n] \cap G}$ classes for an appropriate $n$.    Now  $U$ itself is the equivalence class of $1_G$ under $\sim_U$. So there are only countably many options for $U$.

 \begin{cor} Let $\+ V $ be a Borel set of subgroups of $S_\infty$ that is  closed under conjugation.  Suppose that every group in $\+ V$ has only countably many  open subgroups. Then the isomorphism problem for $\+ V$   is classifiable by countable structures. 
 
 In particular, this is  the case for the oligomorphic and Roelcke precompact groups.
 \end{cor}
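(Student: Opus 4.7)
The plan is to derive this corollary directly from Theorem~\ref{thm:general criterion} by taking $\+ N_G$ to be the set of \emph{all} open subgroups of $G$. Since by hypothesis this set is countable for each $G \in \+ V$, and since it is plainly invariant under topological group isomorphism, the only real work is to verify that the membership relation $\+ T = \{(G, U) : G \in \+ V, \, U \text{ open subgroup of } G\}$ is Borel on $\+ V \times \+ F(\Sinf)$.

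I would express ``$U$ is an open subgroup of $G$'' as the conjunction of three Borel conditions: (a) $U \subseteq G$, which is Borel by the Effros discussion in Section~\ref{s:prelim}; (b) $U \in \+ U(\Sinf)$, which is Borel by the lemma characterizing closed subgroups; and (c) $U$ is open in $G$, which is equivalent to the existence of some $n$ with $[\tau_n] \cap G \subseteq U$, since the sets $[\tau_n] \cap G$ form a neighbourhood basis of $1$ in $G$. For (c), the inclusion $[\tau_n]\cap G \subseteq U$ is itself Borel: using the Borel dense sequence $(p_i^G)$ from Lemma~\ref{lem:dense sequence} and the closedness of $U$, it amounts to the Borel statement $\forall i \, [\, p_i^G \in [\tau_n] \to p_i^G \in U\,]$. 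The set $\+ N_G$ of all open subgroups is then obtained as a Borel section of $\+ T$, and by the hypothesis it is countable; a routine padding by repetition handles the (uninteresting) case of finitely many open subgroups so as to match the ``countably infinite'' wording of the theorem. All hypotheses of Theorem~\ref{thm:general criterion} are then in place, yielding classifiability by countable structures.

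For the second sentence, I would simply invoke the material immediately preceding the corollary in the excerpt: oligomorphic groups are a Borel class, closed under conjugation, and each oligomorphic $G$ has at most countably many open subgroups because every open subgroup contains some $[\tau_n] \cap G$, which has finite index in $G$; Roelcke precompactness is likewise Borel and closed under conjugation, and the argument given in the excerpt, based on the fact that every open subgroup $U$ is a finite union of $\sim_{[\tau_n]\cap G}$-classes and each $\sim_U$-class refines such a partition, shows there are only countably many choices for $U$.

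The main potential obstacle is step (c): making precise the Borelness of ``$U$ is open in $G$''. The subtlety is that openness is a relative condition inside $G$, not inside $\Sinf$, and naively one wants to quantify over all points of $[\tau_n]\cap G$. The dense-sequence trick together with the closedness of $U$ bypasses this uniformly in a Borel way, and I expect nothing beyond this small point to require care; everything else is assembly of facts already established in Section~\ref{s:prelim} and in Theorem~\ref{thm:general criterion}.
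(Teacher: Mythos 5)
Your proposal is correct and takes essentially the same route as the paper: the paper also sets $\+ N_G$ to be the family of all open subgroups and reduces Borelness of $\+ T$ to the fact that a closed subgroup $U\le G$ is open in $G$ iff $[\tau_n]\cap G \sub U$ for some $n$. The only (inessential) difference is how this inclusion is witnessed in Borel form: the paper uses the tree condition $\fa \rho \succeq \tau_n\,([\rho]\cap G \neq \ES \to [\rho]\cap U \neq \ES)$, whereas you use the dense sequence from Lemma~\ref{lem:dense sequence} together with closedness of $U$; both work, and your extra remarks (Borelness of $U\sub G$ and of being a closed subgroup, and the finite-$\+ N_G$ edge case, which the paper silently ignores) are harmless.
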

 \begin{proof} Let $\+ N_G$ be the set of all open subgroups of $G \in \+ V$. The relation $\+ T$  in Thm.\ \ref{thm:general criterion} is Borel because a closed group  $H\le G$ is open in $G$ iff $G \cap [\tau_n] \sub H$ for some $n$, where as before $\tau_n$ is the identity tuple of length $n+1$.  Next, $G \cap [\tau_n] \sub H$ is equivalent to $\fa \rho \succeq \tau_n  (  [\rho] \cap G \neq \ES \to [\rho] \cap H \neq \ES)$.  $\+ T$ is clearly isomorphism invariant, so all the hypotheses of the theorem hold. 
 \end{proof}

\subsection{Remarks}  Roelcke precompactness generalises both compactness and being oligomorphic.  For the second statement, note that by  Tsankov~\cite{Tsankov:12}, $G$ is Roelcke precompact iff  $G $ is the inverse limit of a diagram \bc $\ldots \to G_3 \to G_2 \to G_1$,  \ec where each $G_i$ is an oligomorphic permutation group on some countable set.    
As pointed out in~\cite[before Prop.\ 2.2]{Tsankov:12} the only Roelcke precompact and locally compact Polish  groups $G$ are the compact ones.
%

 We note that besides   the Roelcke precompact groups, there are further non-Archimedean groups with only countably  many open subgroups, for instance many (discrete) countable groups.  For an uncountable example, consider the locally compact group $\mathtt{PSL_2}(\QQ_p)$: by a result of Tits,  each open  subgroup is  either compact, or the whole group. We thank Pierre-Emmanuel Caprace for pointing out this example. 
 



\subsection{A direct construction of  structures for  Theorem~\ref{thm:general criterion}(ii)}
To prove Theorem~\ref{thm:general criterion}(ii)  we used  that any $\Sinf$-orbit equivalence relation $E$  is classifiable by countable structures~\cite[Thm.\ 2.7.3]{Becker.Kechris:96}. This result actually gives an $\Sinf$ reduction of $E$ to the logic action for an infinitary language. On the downside, the proof is quite indirect, making use of the fact that the action  by  left translation of $\Sinf$ on $\+ F(\Sinf)^\omega$ is universal,  and then encoding sequences of closed  sets by a countable structure based on the sequence of  corresponding  trees. 

We now  give a direct construction of  the structures  in Theorem~\ref{thm:general criterion}(ii).
Recall that   the structures for a fixed countable relational   language $L = (R_i)_{i \in I}$ form a Polish space $X_L = \prod_I  \+ P(\omega^{n_i})$ (where $n_i$ is the arity of $R_i$).
 We now define a finite relational language $L$ and a Borel function that assigns to  $G \in \+ V$ a countable $L$-structure $M_G$ in a  such a way that groups $G,H \in \+ V$ are topologically isomorphic iff $M_G \cong M_H$ as $L$-structures.
   
Given a group $G$, the universe of the structure $M_G$ consists of the left and right cosets of subgroups in $\+ N_G$. Since $ \+ N_G$ is countable and each group in $\+N_G$ is open in $G$, this is a countable set.  The language $L$ consists    
of a ternary relation $R$ interpreted as $AB\subseteq C$.
 Again by the result of Lusin-Novikov as  in  \cite[18.10]{Kechris:95}
 there is a Borel function taking a group $G \in \+ V$ to a bijection $ \nu_G\colon \+   \omega \to \dom (M_G)$. So we may identify the elements of $M_G$ with natural numbers.    

\begin{remark} \label{rem:defs} {\rm Being a subgroup is first-order definable in $M_G$ because a coset $A$ is a subgroup  of $G$ if and only if $AA \sub A$.  We can also express in the language of $M_G$ that  a subgroup $A$ is contained in a subgroup $B$,  using the first-order  formula $AB \sub B$. We can say that   a coset $A$ is a left coset of a subgroup $U$ by expressing that $U$ is the maximum subgroup     with the property that $AU \sub A$; similarly for $A$ being a right coset of $U$. Finally,  we can express that $A\sub B$ for arbitrary cosets $A,B$ by expressing that  $AU \sub B$ for the smallest  subgroup $U$. } \end{remark}

%
%
%
%
%

%
%
%
%
%
%
 \begin{claim} The function $G \mapsto M_G$ (for $G \in \+ V$)  is   Borel.  \end{claim}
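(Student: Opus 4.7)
The plan is to verify directly that $G \mapsto M_G$ is Borel into the Polish space $X_L = \+ P(\omega^3)$, where $L$ consists of the single ternary relation $R$. Using the bijection $\nu_G\colon \omega \to \dom(M_G)$ to transport the structure to $\omega$, it suffices to show, for each fixed triple $(a,b,c) \in \omega^3$, that the set
\[ \{ G \in \+ V : M_G \models R(a,b,c) \} \ = \ \{ G \in \+ V : \nu_G(a)\, \nu_G(b) \sub \nu_G(c) \} \]
is a Borel subset of $\+ V$.

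By the paragraph preceding the claim, the assignment $G \mapsto \nu_G \in \+ F(\Sinf)^\omega$ is Borel, so each coordinate map $G \mapsto \nu_G(i)$ is Borel into the Effros space $\+ F(\Sinf)$. Lemma~\ref{lem:mult} tells us that the ternary relation $\{(A,B,C) \in \+ F(\Sinf)^3 : AB \sub C\}$ is Borel. The displayed set is then the preimage of this Borel relation under the Borel map $G \mapsto (\nu_G(a), \nu_G(b), \nu_G(c))$, hence Borel. This suffices for the claim.

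The only nontrivial ingredient is Lemma~\ref{lem:mult}; everything else is a composition of Borel maps. For completeness, I would remark that the Borelness of $\nu_G$ itself (asserted just before the claim) is obtained exactly as $\eta_G$ was in the proof of Theorem~\ref{thm:general criterion}: one applies the Lusin--Novikov uniformization \cite[18.10]{Kechris:95} to the Borel, countably-sectioned relation
\[ \{((G,B),U) : G \in \+ V,\ U \in \+ N_G,\ \exists i \in \omega\, (B = p_i^G U \ \text{or}\ B = U p_i^G)\}, \]
where the $p_i^G$ come from Lemma~\ref{lem:dense sequence}. The main (though routine) point is that left and right translation by elements of $\Sinf$ act in a Borel way on $\+ F(\Sinf)$, which is what makes this relation Borel.
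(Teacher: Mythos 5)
Your proof is correct and follows essentially the same route as the paper: for each fixed triple it reduces membership of $R$ to the condition $\nu_G(a)\nu_G(b)\sub\nu_G(c)$, then invokes the Borelness of $G\mapsto\nu_G$ together with Lemma~\ref{lem:mult} on the relation $AB\sub C$ in $\+ F(\Sinf)$. Your closing remark on obtaining $\nu_G$ via Lusin--Novikov (covering both left and right cosets) just spells out what the paper asserts in the sentence preceding the claim.
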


\begin{proof}
Given a triple of numbers $\ol a$,  we need  to show that the preimage of the set of structures that satisfy $R \ol a$, that is,   $\{ G \colon \, M_G \models R \ol a\}$,   is Borel on $\+ F (\Sinf)$. 
 The preimage of the structures where the product of the   cosets denoted by $a_0, a_1$ is co	ntained in the   coset denoted by $a_2$ is $\{G \in \+ V \colon \, \nu_G(a_0) \nu_G(a_1)  \sub \nu_G(a_2)\}$.  As observed  in    Lemma~\ref{lem:mult}, $AB \sub C$  is a Borel relation on $\+ F (\Sinf)$. Since the assignment $G \mapsto \nu_G$ is also Borel, this preimage is Borel.     
\end{proof}
First suppose that  $G \cong H$ via $\theta$. By the  hypothesis of isomorphism invariance for   the relation $\+ T$ defined  in Theorem~\ref{thm:general criterion},  $M_G \cong M_H$ via $\nu_H^{-1}  \circ \Theta \circ \nu_G$, where $\Theta$ is the  map on cosets induced by $\theta$. 

Next we show that  $M_G \cong  M_H$  implies that $G \cong H$.
To $g \in G$ we associate the pair  $L_g, R_g$, where $L_g$ is the set  of left cosets in $M_G$ containing $g$,  and $R_g$ the set of right cosets containing $g$. Both sets are neighbourhood bases of $g$ consisting of open sets. So we have the following properties of $L=L_g$ and $R = R_g$: 

\begin{itemize} \item[(1)] $L$ and $R$ are downward directed under inclusion.  

\item[(2)]
 For each $A\in L$ and $B\in R$   there is  $C \in L$   such that $C  \sub A\cap B$. 
 
 \item[(3)]
Each group  $U \in \+ N_G$ has a left coset in $L$ and a right coset in $R$ (these cosets are  necessarily unique). 
 \end{itemize}
 
Suppose now we have a pair $L,R$ with the properties above. We construct an element $g$ of $G$ such that $L= L_g$ and $R= R_g$. 

Let $U_0 \in  \+ N_G$. Using that $\+ N_G$ is a neighbourhood basis, for each $n>0$,   let $U_n \in \+ N_G$ be such that $U_n \sub   [\tau_n] \cap U_{n-1}$,  where  as above $\tau_n$ is the identity string of length $n+1$.  Define $g(n) = r_n(n)$ where  $r_n \in G $  and $r_nU_n $ is the left  coset of $U_n$ in $L$. Also define $g^*(n) = s_n^{-1}(n)$ where $U_n s_n $ is the right  coset of $U_n$ in $R$.  

 Clearly the $r_n \uhr {n+1} $ ($n \in \omega$) are compatible strings with union $g$. Similarly the $s^{-1} _n \uhr {n+1} $ are compatible strings with union $g^*$.   However, so far we don't know that the injective function  $g$ is a permutation.

\begin{claim} \label{cl:mult} $g^*$ is the inverse of $g$. In particular,  $g \in \Sinf$. \end{claim}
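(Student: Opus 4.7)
The plan is to prove the single identity $g \circ g^* = \mathrm{id}_\omega$. This will suffice: surjectivity of $g$ follows immediately, and combined with the injectivity already observed, it yields $g \in \Sinf$; the identity $g^* = g^{-1}$ then follows from $g(g^*(g(n))) = g(n)$ and injectivity of $g$.

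To prove $g(g^*(n)) = n$, fix $n$ and set $k = g^*(n) = s_n^{-1}(n)$. I will choose $m \ge \max(n,k)$ and exploit property~(2). Since $r_m U_m \in L$ by property~(3) and $U_n s_n \in R$, property~(2) produces $C \in L$ with $C \sub r_m U_m \cap U_n s_n$; as $C$ is a coset it is nonempty, so we may pick
\[ c \in r_m U_m \cap U_n s_n \qquad\text{and write}\qquad c = r_m u = v s_n,\ u \in U_m,\ v \in U_n. \]
Now evaluate at $k$. From $u \in U_m \sub [\tau_m]$ and $k \le m$ we get $u(k) = k$, so $c(k) = r_m(u(k)) = r_m(k)$. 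On the other hand, $s_n(k) = s_n(s_n^{-1}(n)) = n$, and $v \in U_n \sub [\tau_n]$ fixes $n$, so $c(k) = v(s_n(k)) = v(n) = n$. Hence $r_m(k) = n$, and compatibility of the strings $r_j \uhr{j+1}$ (already established) together with $m \ge k$ gives $g(k) = r_m(k) = n$, as required.

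The key structural ingredient is property~(2): an element $c$ simultaneously in a left coset of $U_m$ and a right coset of $U_n$ pins down $r_m$ on the initial segment $\{0,\dots,m\}$ and pins down $s_n^{-1}$ at $n$, forcing the left-coset limit $g$ and the right-coset limit $g^*$ to be mutual inverses. I do not expect a serious obstacle beyond the mild bookkeeping of choosing $m \ge \max(n,k)$ so that $u \in U_m \sub [\tau_m]$ fixes $k$, allowing one to read off $r_m(k)$ from both factorisations of $c$.
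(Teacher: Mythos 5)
Your proof is correct and takes essentially the same route as the paper: both arguments use properties (2)--(3) to produce a single permutation lying simultaneously in a left coset $r_mU_m$ and the right coset $U_ns_n$, and then compare the value forced by the left-coset factorisation with the one forced by the right-coset factorisation, the only difference being that the paper phrases this as a contradiction (establishing $g^*\circ g=\mathrm{id}$ with $n=\max(x,y)$) while you compute $g\circ g^*=\mathrm{id}$ directly with $m\ge\max(n,k)$. (Like the paper, your appeal to $U_m\sub[\tau_m]$ and $U_n\sub[\tau_n]$ tacitly assumes the indices are $\ge 1$, a harmless edge case of the construction.)
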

\begin{proof}
Note that   $r_n$ determines the first $n+1$ values of any permutation $f$ in $r_n U_n$, namely $f(i) = r_n(i) $ for each $i \le n$. Likewise, $ h \in U_n s_n$ implies  that $h^{-1} \in s_n^{-1} U_n$ and hence $h^{-1}(i) = s^{-1}_n(i) $ for each $i \le n$. 

Assume for a contradiction that, say $g(x)= y$ and $g^*(y) \neq x$. Let $n = \max(x,y)$. Then  $r_nU_n \cap U_n s_n = \ES$, contrary to (2) above:  if $f \in r_nU_n \cap U_n s_n$ then by the compatibility and the  observation above,  $f(x) = r_n(x) = y$, while $f^{-1} (y) = s_n^{-1}(y) \neq x$. 
\end{proof}

\begin{claim} (i) $g \in G$. (ii) $L_g = L$ and $R_g = R$. \end{claim}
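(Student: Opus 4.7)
The plan is as follows. For (i), I would first observe that $g$ is a pointwise limit of the sequence $(r_n) \sub G$, and then use that $G$ is closed. For (ii), the main technical step is proving that $g \in r_n U_n$ for every $n$; from this, both $L = L_g$ and $R = R_g$ will follow by combining properties (1) and (3), with the equality for right cosets coming from the inversion-symmetry $g^* = g^{-1}$ provided by Claim~\ref{cl:mult}.

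For (i), I would exploit that $U_n \sub [\tau_n]$: this forces every element of $r_n U_n$ to agree with $r_n$ on $\{0, \ldots, n\}$, so $g \uhr{n+1} = r_n \uhr{n+1}$, whence $r_n \to g$ in $S_\infty$. Closedness of $G$ then yields $g \in G$. For the key intermediate claim $g \in r_n U_n$, I would fix $n$ and show $r_n^{-1} r_m \in U_n$ for every $m \geq n$: property (1) gives some $C \in L$ inside $r_n U_n \cap r_m U_m$, so this intersection is non-empty, and since $U_m \sub U_n$ the coset $r_m U_m$ must lie in a single $U_n$-coset, which is forced to be $r_n U_n$. Letting $m \to \infty$ and using that $U_n$ is clopen in $G$ (open subgroups of topological groups being automatically closed), the limit $r_n^{-1} g$ stays in $U_n$.

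From the intermediate claim, $L \sub L_g$ follows: given $A \in L$ a left coset of $V \in \+ N_G$, pick $n$ with $U_n \sub V$; property (1) ensures $A \cap r_n U_n$ is non-empty, and since $U_n \sub V$ the coset $r_n U_n$ lies in a single left coset of $V$, necessarily $A$. Hence $g \in r_n U_n \sub A$. The reverse inclusion $L_g \sub L$ comes from (3): if $A \in L_g$ is a left coset of $V$, the unique left coset $A' \in L$ of $V$ must also contain $g$, so $A = A'$. The equality $R = R_g$ follows by applying the same argument to $g^{-1}$ using the left cosets $s_n^{-1} U_n$, which are the inverses of the right cosets $U_n s_n \in R$. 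The main obstacle is the intermediate claim $g \in r_n U_n$: its proof couples property (1) across different levels with the standard fact that open subgroups are clopen, which is precisely what prevents the limit $r_n^{-1} g$ from escaping $U_n$.
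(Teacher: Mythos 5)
Your proposal is correct and takes essentially the same route as the paper, which disposes of (i) via $g\uhr{n+1}=r_n\uhr{n+1}$ and closedness of $G$, and declares (ii) clear from the definition of $g$ together with $g^*=g^{-1}$. Your write-up merely fills in the details the paper leaves implicit, in particular the intermediate fact $g\in r_nU_n$ (and dually $g\in U_ns_n$) via nesting of the cosets and closedness of open subgroups, and the derivation of $L_g=L$, $R_g=R$ from properties (1) and (3).
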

\begin{proof} (i)  follows because $g \uhr {n+1} = r_n \uhr {n+1}$ and  $G$ is closed.  (ii) is clear from the definition of $g$ and $g^* = g^{-1}$. \end{proof}

\begin{claim} For  $a,b,c \in G$,

$a \circ b= c \lra \fa C \in L_c \  \ex A \in L_a \ex B \in L_b \ [AB \sub C]$.  \end{claim}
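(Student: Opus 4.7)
The plan is to prove both directions by working with left cosets of subgroups $U \in \mathcal{N}_G$ and exploiting that $\mathcal{N}_G$ is a neighbourhood basis of $1$.

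For the forward direction, assume $a \circ b = c$. Given $C \in L_c$, write $C = cU$ for some $U \in \mathcal{N}_G$ (since left cosets of $U$ partition $G$ and $c \in C$). I would produce $A \in L_a$ and $B \in L_b$ of the form $A = aV$, $B = bW$ with $AB \subseteq cU = abU$. This reduces to finding $V, W \in \mathcal{N}_G$ with $VbW \subseteq bU$, equivalently $b^{-1}Vb \cdot W \subseteq U$. First pick any $W \in \mathcal{N}_G$ with $W \subseteq U$. Since $x \mapsto b^{-1}xb$ is continuous at $1$ and $\mathcal{N}_G$ is a neighbourhood basis of $1$, there is $V \in \mathcal{N}_G$ with $b^{-1}Vb \subseteq W$; then $VbW \subseteq bW \cdot W = bW \subseteq bU$, so $AB = aVbW \subseteq abU = C$, as required.

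For the backward direction, assume the right-hand side. For each $U \in \mathcal{N}_G$, take $C = cU \in L_c$ and obtain $A = aV \in L_a$, $B = bW \in L_b$ (with $V,W \in \mathcal{N}_G$) such that $aV \cdot bW \subseteq cU$. Picking $v = 1 \in V$ and $w = 1 \in W$ yields $ab \in cU$, i.e., $c^{-1}ab \in U$. Since this holds for every $U \in \mathcal{N}_G$ and $\mathcal{N}_G$ is a neighbourhood basis of $1$ in the Hausdorff group $G$, we have $\bigcap_{U \in \mathcal{N}_G} U = \{1\}$, so $c^{-1}ab = 1$, i.e., $a \circ b = c$.

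The main subtlety is the forward direction, where one must shrink the left coset of $a$ to absorb the conjugation by $b$; this is exactly where the continuity of right translation (or equivalently of $x \mapsto b^{-1}xb$) is used. The backward direction is essentially formal, resting only on the basis property of $\mathcal{N}_G$ and the fact that the identity element $1$ can be separated from any other point by some $U \in \mathcal{N}_G$.
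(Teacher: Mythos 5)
Your proof is correct, and the forward direction is just an explicit unwinding of what the paper dismisses in one line as ``continuity of composition in $G$'': you write $C=cU$, $A=aV$, $B=bW$ and absorb the conjugation $b^{-1}Vb$ into $W$, which is exactly the standard proof that multiplication is continuous when the identity has a basis of open subgroups. The backward direction is where you diverge mildly from the paper: you argue directly that $ab\in cU$ for every $U\in\mathcal{N}_G$ (by evaluating at $1\in V$, $1\in W$) and conclude $ab=c$ from $\bigcap_{U\in\mathcal{N}_G}U=\{1\}$, i.e.\ from Hausdorffness; the paper instead argues contrapositively inside $S_\infty$, using the previously fixed chain $U_n\subseteq[\tau_n]\cap U_{n-1}$ so that the left coset $C=cU_n$ pins down the value $d(n)=c(n)$ for all $d\in C$, whence $a\circ b\notin C$ if $(a\circ b)(n)\neq c(n)$. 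The two arguments rest on the same fact (the cosets of groups in $\mathcal{N}_G$ separate points); yours is more abstract and works verbatim in any Hausdorff topological group with a basis of open subgroups at the identity, while the paper's version is tailored to the permutation-group setting and reuses the cosets $U_n$ already introduced in the surrounding construction. No gaps.
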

\begin{proof} The implication from left to right holds  by continuity of composition in $G$.  For the converse implication, suppose that $(a \circ b)(n) \neq  c(n)$. Let   $C \in L_c$   be the left coset of $U_n$. Then $d(n) = c(n)$ for any $d \in C$. If $A \in L_a$ and $B \in L_b$ then $a\circ b \in AB $ so that $AB \not \sub C$.  \end{proof}

Now suppose that $M_G \cong  M_H$ via $\rho$. Given $g \in G$, note that the  pair $\rho(L_g), \rho(R_g)$ has the properties (1-3) listed above using Remark~\ref{rem:defs}. So let   $\theta (g)$ be the element of $H$ obtained from this pair. Similarly,   the inverse of $\theta$ is determined by  the inverse of $\rho$.  

By Claim~\ref{cl:mult}, $\theta$ preserves the composition operation.  The identity of $G$ is the only element $g \in G$ such that $L_g = \+ N_G$. Since $\rho$ is an isomorphism of the structures, $\theta(1_G) = 1_H$,  and $\theta$ and $\theta^{-1} $ are  continuous at $1$. So $\theta $ is a topological isomorphism.

  \section{Hardness result for isomorphism of profinite groups} 
  Graph isomorphism  is $\le_B$-complete for $\Sinf$-orbit equivalence relations. 
We now consider the converse problem of Borel reducing graph isomorphism to isomorphism on a Borel class of nonarchimedean groups.  
  We first consider the  case of discrete    groups.
Essentially by a result of  Mekler~\cite[Section 2]{Mekler:81} discussed in more detail below, graph isomorphism is Borel reducible to isomorphism of countable groups. 
 Given a discrete group $G$ with domain $\omega$, the left translation action of $G$ on itself induces a topological isomorphism  of $G$ with a discrete subgroup of $\Sinf$. Hence graph isomorphism is Borel reducible to isomorphism   of discrete, and hence of   locally compact,   subgroups of $S_\infty$.

  We now show a similar hardness result for  the compact non-Archimedean groups (equivalently, the separable profinite groups).
  Recall that a  group $G$ is step 2  nilpotent  (nilpotent-2 for short) if it satisfies the law $[[x,y],z]=1$. Equivalently, the commutator subgroup  is contained  in the center. For a prime $p$, the group of unitriangular matrices $$\mathrm{UT}^3_3(\ZZ / p \ZZ) = \big \{ \left(\begin{matrix} 1 & a  & c \\  0  &  1 & b \\ 0 &  0 & 1
\end{matrix}\right) \colon \, a,b,c \in \ZZ / p \ZZ\big \}$$ is an example of a nilpotent-2 group of exponent~$p$.

\subsection{Completion} \label{ss:completion}
 We need some preliminaries on the completion of a group $G$ with respect to a system of subgroups of finite index. We follow \cite[Section 3.2]{Ribes.Zalesski:00}. 
 Let  $\+ V$ be a set of normal subgroups of finite index in $G$ such that $U, V \in  \+ V $ implies that there is $W \in \+ V$  with  $W \sub U  \cap V$. We can turn $G$ into a topological group by declaring $\+ V$ a basis of neighbourhoods  of the identity. In other words, $M \sub G$ is open  if for each $x \in M$ there is $U \in \+ V$ such that $xU \sub M$. 
 
 The completion of $G$ with respect to $\+ V$ is the inverse limit  $$G_{\+ V} = \varprojlim_{U \in \+ V} G/U,$$ where $\+ V$ is ordered under inclusion and the inverse system is equipped with  the natural    maps: for $U \sub  V$, the  map  $p_{U,V} \colon G/U \to G/V $  is given by $gU \mapsto gV$. The inverse limit can be seen as a closed subgroup of the direct product $\prod_{U \in \+ V} G/U$ (where each group $G/U$ carries  the discrete topology), consisting of the functions~$\alpha $ such that $p_{U,V}(\alpha (gU)) = gV$ for each $g$.  Note that the map $g\mapsto (gU)_{U \in \+ V}$ is a continuous homomorphism $G \to G_{\+ V}$ with dense image; it is injective iff $\bigcap \+ V = \{1\}$. 
 
Note that if $\+ V$ and $\+ W$ are equivalent bases of neighbourhoods of the identity (i.e., $\forall V \in \+ V \exists W \in \+ W \,  [ W \sub V]$ and conversely) then the resulting completions $G_\+ V$ and $G_\+ W$ are homeomorphic.   If the set $\+ V$ is understood from the context, we will usually  write $\widehat G$ instead of $G_\+ V$.

\subsection{Review of Mekler's construction} Fix an odd  prime $p$. The   main construction  in   Mekler \cite[Section 2]{Mekler:81}   associates to each symmetric and irreflexive graph $A$ a nilpotent-2 exponent-$p$   group $G(A)$ in such a way that isomorphic graphs yield isomorphic groups.  In the countable case, the map $G$ is Borel  when viewed as  a  map from the  Polish space of countable graphs to the space of countable groups.

To recover $A$ from $G(A)$, Mekler uses a technical restriction on the given graphs.
\begin{definition} A symmetric and irreflexive graph is  called \emph{nice} if it has no triangles, no squares, and for each pair of distinct vertices $x,y$, there is a vertex $z$ joined to $x$ and not to $y$.   \end{definition} 
 Mekler \cite{Mekler:81} proves that   a nice graph   $A$ can be interpreted in $G(A)$ using first-order formulas  without parameters. (See \cite[Ch.\ 5]{Hodges:93} for background on interpretations.) In particular,  for nice graphs $A, B$  we have  $A \cong B$ iff $G(A) \cong G(B)$. 
Since isomorphism of nice graphs is Borel complete for $S_\infty$-orbit equivalence relations, so is isomorphism of countable nilpotent-2 exponent $p$ groups.  For an alternative    write-up of Mekler's construction see \cite[A.3]{Hodges:93}.

In the following all graphs will be  symmetric, irreflexive, and have  domain $\omega$. Such a graph is thus given by its set of edges   $A \sub \{ (r, s ) \colon \, r < s \}$.  We write $r A s$ (or   simply $rs$ if $A$ is understood)  for $(r, s ) \in A$. 

Let $F$ be the free nilpotent-2 exponent-$p$ group on free generators $x_0, x_1, \ldots$. For $r \neq s$ we write $$ x_{r,s} = [x_r, x_s].$$ As noted in \cite{Mekler:81}, the centre $Z(F)$ of $F$ is an elementary abelian $p$-group (so an $\mathbb{F}_p$ vector space) with basis $x_{r,s}$ for $r< s$. Given a graph $A$, Mekler   sets 
\[ G(A) = F/ \la x_{r,s} \colon \, r A s \ra. \]
In particular $F = G(\ES)$. The centre  $Z= Z(G(A))$ is an abelian group of exponent $p$ freely generated by the $x_{r,s}$ such that  $\lnot r A s$.  Also $G(A)/Z $ is 
an abelian group of exponent $p$ freely generated by the $Z x_i$. (Intuitively,  when  defining $G(A)$ as a quotient of $F$,   exactly the commutators $x_{r,s}$ such that  $r A s$ are deleted. We make sure that no    vertices are deleted.)  

\begin{lemma}[Normal form for $G(A)$, \cite{Mekler:81,Hodges:93}] \label{lem: NF G}Every element $c$ of $Z$ can be uniquely written  in the form $ \prod_{(r, s ) \in L} x_{r,s}^{\beta_{r,s}}$ where    $L \sub \omega \times \omega$ is a finite set of pairs $(r, s )$ with $r<s$ and  $\lnot rAs$, and $0 < \beta_{r,s}  < p$. 

Every element of $G(A)$ can be uniquely written  in the form $c \cdot v$  where  $c \in Z$, and $v=  \prod_{i \in D} x_i^{\alpha_i}$, for $D \sub \omega$   finite and $0 < \alpha_i < p$.  (The product  $\prod_{i \in D} x_i^{\alpha_i}$ is interpreted    along   the indices  in ascending order.)
 
\end{lemma}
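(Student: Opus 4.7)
The plan is to reduce both parts of the lemma to the structural facts about $F$ already recalled in the excerpt, using that $N = \la x_{r,s}\colon rAs\ra$ is in fact an $\mathbb{F}_p$-subspace of $Z(F)$. Indeed, since every generator of $N$ is already central in $F$, the normal closure agrees with the ordinary span, so $N = \mathrm{span}_{\mathbb{F}_p}\{x_{r,s}\colon r<s,\, rAs\}$. Setting $N^\perp := \mathrm{span}_{\mathbb{F}_p}\{x_{r,s}\colon r<s,\, \lnot rAs\}$ gives the internal direct sum $Z(F) = N \oplus N^\perp$, and hence $Z = Z(G(A)) \cong Z(F)/N \cong N^\perp$ as $\mathbb{F}_p$-vector spaces. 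Part 1 of the lemma is then merely the assertion that every vector in $Z$ has a unique expression in the basis $\{x_{r,s}\colon r<s,\, \lnot rAs\}$, with $L$ being the support of that expression and the $\beta_{r,s} \in \{1,\ldots,p-1\}$ the coordinates.

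For the existence half of Part~2, I would lift $g \in G(A)$ to some $\tilde g \in F$ and apply the normal form in $F$ itself, which follows directly from the two vector-space facts recalled in the excerpt: every $\tilde g \in F$ is uniquely a product $\tilde c \cdot \tilde v$ with $\tilde c \in Z(F)$ and $\tilde v = \prod_{i \in D} x_i^{\alpha_i}$ (taken in ascending order of $i$, with $0 < \alpha_i < p$). Decomposing $\tilde c = n + \tilde c_0$ along $Z(F) = N \oplus N^\perp$ and passing to $G(A) = F/N$ kills the $n$-component, so $g = c \cdot v$ with $c = \tilde c_0 N \in Z$ in the Part~1 normal form and $v$ of the required shape.

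For uniqueness, I would suppose $c_1 v_1 = c_2 v_2$ in $G(A)$ with both factorizations in normal form and lift the data literally back to $F$: this gives $\tilde c_i \in N^\perp \sub Z(F)$ and $\tilde v_i = \prod_{j \in D_i} x_j^{\alpha^{(i)}_j}$ with $\tilde c_1 \tilde v_1 \equiv \tilde c_2 \tilde v_2 \pmod{N}$. Since $N \sub Z(F)$, projecting this congruence to $F/Z(F)$ yields $\tilde v_1 Z(F) = \tilde v_2 Z(F)$; freeness of $F/Z(F)$ on $\{x_i Z(F)\}$ then forces $D_1 = D_2$ and $\alpha^{(1)}_j = \alpha^{(2)}_j$ for all $j$, so $\tilde v_1 = \tilde v_2$ in $F$. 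Cancelling gives $\tilde c_1 \equiv \tilde c_2 \pmod{N}$; but $\tilde c_1, \tilde c_2 \in N^\perp$ and $N \cap N^\perp = 0$, so $\tilde c_1 = \tilde c_2$ and all the normal form data match.

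The only genuine subtlety I would flag is carefully verifying the direct-sum decomposition $Z(F) = N \oplus N^\perp$, since everything downstream depends on the existence of a canonical section $Z(F)/N \to N^\perp$ spanned by exactly those basis vectors $x_{r,s}$ which survive in $G(A)$. This is pure linear algebra over $\mathbb{F}_p$, but it is what ensures the two parts of the lemma refer to the same central subgroup $Z$ and produce a mutually compatible normal form.
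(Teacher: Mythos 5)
The paper does not actually prove this lemma --- it is quoted from Mekler \cite{Mekler:81} and Hodges \cite{Hodges:93}, and the two sentences immediately preceding it (``the centre $Z=Z(G(A))$ is an abelian group of exponent $p$ freely generated by the $x_{r,s}$ with $\lnot rAs$'' and ``$G(A)/Z$ is freely generated by the $Zx_i$'') are precisely the facts that carry its content. Measured against that, your argument is mechanically fine in most places (normal closure $=$ span since the generators are central, the splitting $Z(F)=N\oplus N^\perp$, the lifting argument for existence, and the uniqueness argument via $F/Z(F)$ and $N\cap N^\perp=\{1\}$ are all correct), but it has one genuine gap, and it is not the one you flag. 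The identification $Z=Z(G(A))\cong Z(F)/N$ --- equivalently, that the image of $Z(F)$ is \emph{all} of the centre of the quotient, i.e.\ that no element $c\cdot v$ with $v=\prod_{i\in D}x_i^{\alpha_i}$, $D\neq\emptyset$, becomes central after killing the edge commutators --- is not a structural fact about $F$ and is not linear algebra. It needs the commutator computation $[\prod_i x_i^{\alpha_i},x_j]=\prod_{i\neq j}x_{i,j}^{\pm\alpha_i}$ (the finite case of Lemma~\ref{lem:com}) together with a hypothesis on the graph: such a $v$ is central in $G(A)$ exactly when every $i\in D$ is joined to all other vertices, so for a graph having a vertex adjacent to everything else the claimed description of $Z$ fails, and with it uniqueness in the second part (e.g.\ $x_0=1\cdot x_0=x_0\cdot 1$ with $x_0\in Z$). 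For nice graphs --- the only ones used later in the paper --- no such vertex exists and the identification holds, but your write-up asserts it silently inside ``$Z\cong Z(F)/N\cong N^\perp$'' and never addresses the inclusion $Z(G(A))\subseteq Z(F)N/N$.

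By contrast, the direct-sum decomposition $Z(F)=N\oplus N^\perp$, which you single out as the only subtlety, really is immediate from the stated basis of $Z(F)$. And once the computation of $Z(G(A))$ is in place (or simply taken from Mekler/Hodges, as the paper does), the detour through $F$ is unnecessary: both parts of the lemma follow directly from ``$Z$ is free abelian of exponent $p$ on the non-edge $x_{r,s}$'' and ``$G(A)/Z$ is free abelian of exponent $p$ on the $Zx_i$'' by exactly the two-step existence/uniqueness argument you already carry out for $F$.
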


\subsection{Hardness result for  isomorphism of  profinite groups} The following first appeared in pre\-print form in  \cite{Nies:16}.

\begin{theorem} \label{thm:Mekler} Let $p \ge 3$ be a prime. Any $S_\infty$ orbit equivalence relation can be Borel reduced to isomorphism between profinite nilpotent-2   groups  of exponent~$p$.  
\end{theorem}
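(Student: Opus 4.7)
The plan is to lift Mekler's construction from discrete to profinite groups. To each graph $A$ on $\omega$ I associate the completion $\hat G(A) = \varprojlim_n G(A \uhr n)$ of $G(A)$ with respect to the descending chain of finite-index normal subgroups $V_n^A = \ker \pi_n^A$, where $\pi_n^A \colon G(A) \twoheadrightarrow G(A \uhr n)$ is the canonical surjection sending $x_i \mapsto 1$ for $i \ge n$. Each $G(A \uhr n)$ is finite, and $\bigcap_n V_n^A = \{1\}$ by the normal form of Lemma~\ref{lem: NF G}, so $\hat G(A)$ is a profinite nilpotent-2 group of exponent $p$. Any finite-index normal subgroup of $G(A)$ contains all but finitely many $x_i$ (its quotient is finite and topologically generated by the images of the $x_i$), hence contains some $V_n^A$, so the $V_n^A$ are cofinal among finite-index normal subgroups and $\hat G(A)$ is functorial in $G(A)$. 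Realising $\hat G(A)$ as a closed subgroup of $\Sinf$ via its left action on the canonically enumerated countable set $\bigsqcup_n G(A \uhr n)$, the map $A \mapsto \hat G(A)$ is Borel into $\+ U(\Sinf)$ because each finite quotient $G(A\uhr n)$ depends only on a finite piece of $A$.

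The forward direction of the reduction is then automatic: a graph isomorphism $\phi \colon A \to B$ induces an abstract group isomorphism $G(A) \cong G(B)$ sending $x_i \mapsto x_{\phi(i)}$, and by functoriality this extends to a continuous isomorphism $\hat G(A) \cong \hat G(B)$. Since isomorphism of nice graphs is already Borel complete for $\Sinf$-orbit equivalence relations by Mekler, it suffices to show the converse only for nice graphs $A$ and $B$: a topological group isomorphism $\hat G(A) \cong \hat G(B)$ must force $A \cong B$.

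For this converse I plan to carry out Mekler's interpretation of the graph $A$ directly inside $\hat G(A)$. Mekler's formulas use only centralizers, commutators, the centre, and $p$-th powers, each of which behaves well topologically: centralizers are closed subgroups, the centre $\hat Z = Z(\hat G(A))$ is the closure of $Z(G(A))$, and the commutator descends to a continuous alternating bilinear form $\beta_A \colon (\hat G(A)/\hat Z)^2 \to \hat Z$ whose zero-pattern records the edges of $A$. Both $\hat G(A)/\hat Z$ and $\hat Z$ are isomorphic to $\prod_\omega \mathbb F_p$ as topological $\mathbb F_p$-vector spaces (for nice graphs on $\omega$, whose non-edges form an infinite set), so the topological isomorphism type of $\hat G(A)$ is captured exactly by the equivalence class of $\beta_A$ under independent topological basis changes of domain and target. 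For nice $A$, Mekler's rigidity argument---using the absence of triangles and squares together with the pairwise separation property---shows that this equivalence class uniquely determines $A$ up to graph isomorphism.

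The main obstacle will be the topological transfer of Mekler's recovery argument: showing that any continuous equivalence between $\beta_A$ and $\beta_B$ refines to an underlying vertex bijection $A \to B$ that is a graph isomorphism. Because Mekler's interpretation formulas involve only finitely many generators at a time, each instance of his recovery step lives inside some finitely generated subquotient $\hat G(A)/V_n^A \cong G(A \uhr n)$; I expect the discrete-to-profinite transfer to go through a back-and-forth argument along the inverse system, assembling finite-level partial graph isomorphisms $A \uhr n \to B$ into a global isomorphism $A \cong B$ using the niceness of $A$ and $B$.
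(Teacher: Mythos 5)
Your overall architecture is the same as the paper's (complete $G(A)$ along the filtration by the normal subgroups generated by the tails $x_i$, $i\ge n$, then recover a nice graph from the completed group via Mekler's interpretation), but the proposal leaves exactly the hard step unproved, and the strategy you sketch for it would not work as stated. The difficulty is that, by the normal form for the completion, elements of $\hat G(A)$ are infinite products $\prod_{i\in D}x_i^{\alpha_i}$ modulo the centre, with $D\sub\omega$ possibly infinite. Mekler's definable vertex set is built from centralizers and the classes $[\bar v]$, and these are global objects of $\hat G(A)$: whether $|[\bar v]|=p-1$, or whether some $w$ commutes with $v$, involves elements of infinite support that are not visible in any finite quotient, so the claim that ``each instance of his recovery step lives inside some finitely generated subquotient $\hat G(A)/V_n^A\cong G(A\uhr{n})$'' is not true in the relevant sense. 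Worse, $A\uhr{n}$ need not be nice even when $A$ is (the witness $z$ joined to $x$ and not to $y$ may be $\ge n$), so Mekler's recovery cannot simply be applied level by level; and a topological isomorphism $\hat G(A)\cong\hat G(B)$ need not respect the level filtrations, so there are no canonical ``finite-level partial graph isomorphisms'' to assemble. What is actually needed (and is the bulk of the paper's proof) is to establish a normal form and an infinite-product commutator formula for $\hat G(A)$ and then redo Mekler's four-case analysis allowing infinite supports $D$; the new phenomena (e.g.\ in the case where no single vertex is joined to all of $D$) require choosing a reordering of $D$ of type $\omega$ and an induction along it, using no triangles, no squares, and the separation property of the full graph $A$. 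Until that analysis is carried out, the converse direction (an isomorphism $\hat G(A)\cong\hat G(B)$ forces $A\cong B$) is a genuine gap. Your reduction to the ``equivalence class of the commutator form $\beta_A$'' is also stronger than what you prove or need; only the direction from a group isomorphism to a form equivalence is available for free.

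A secondary but genuine error: your claim that every finite-index normal subgroup of $G(A)$ contains all but finitely many $x_i$, hence contains some $V_n^A$, is false. For example, the kernel of the homomorphism $G(A)\to\ZZ/p\ZZ$ sending every generator $x_i$ to $1$ has index $p$ and contains no $x_i$, hence no $V_n^A$; so the $V_n^A$ are not cofinal among finite-index normal subgroups and $\hat G(A)$ is not the full profinite completion. Fortunately you do not need this: a graph isomorphism acts by a permutation of the generators, and such a permutation carries the filtration $\{V_n^A\}$ to a system of subgroups mutually cofinal with $\{V_n^B\}$, so the completions with respect to these equivalent neighbourhood bases are topologically isomorphic; this weaker invariance (which is how the paper argues the forward direction) is all that is required.
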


We note that isomorphism on the class of abelian compact subgroups of $S_\infty$ is not Borel--above graph isomorphism as shown in \cite{Nies:16}. So in a sense the class  of nilpotent-2 groups of fixed exponent $p$  is the smallest possible.
\begin{proof} The  proof is based on Mekler's, replacing the groups $G(A)$ he defined by their completions $\hat G(A)$  with respect to a suitable basis of neighbourhoods of the identity. 
Given  a graph $A$, let $R_n$ be the normal subgroup of $G(A)$   generated by the $x_i$, $i \ge  n$.    Note that $G(A)/R_n$ is  a finitely generated  nilpotent torsion group, and hence finite.   Let    $  \widehat G(A)$ be  the   completion   of  $G(A)$    with respect to  the   set  $\+ V = \{R_n \colon \, n \in \omega\}$     (see Subsection~\ref{ss:completion}). By Lemma~\ref{lem: NF G} we have $\bigcap_n R_n= \{1\}$, so $G(A)$ embeds   into $\hat G(A)$. 
 
In  set theory one inductively defines $0 = \ES$ and $n = \{0, \ldots, n-1\}$ to obtain the  natural numbers; this will  save on notation here. A set of coset representatives for $G(A)/R_n$ is given by the $c\cdot v$  as in  Lemma~\ref{lem: NF G}, where  $D \sub n$ and $E \sub n \times n$.  The completion $\hat G(A)$  of $G(A)$ with respect to the $R_n$ consists of the maps $\rho  \in \prod_n G(A)/R_n$ such that $\rho (g R_{n+1}) = gR_n$ for each $n \in \omega$ and $g \in G(A)$. 


If $\rho (g R_{n+1}) = hR_n$ where $h = c \cdot v$ is a coset representative,  then we can define a coset representative  $c' \cdot v'$ for $gR_{n+1}$   as follows: we obtain $c'$   from $c$ by potentially appending to  $c$ factors involving  the $x_{r,n}$ for $r< n$, and  $v'$ from $v$ by potentially appending a factor    $x_n^{\alpha_n}$.   So we can view $\rho $ as given by multiplying two  formal infinite products:
\begin{lemma}[Normal form  for $\hat G(A)$] 
Every   $c \in Z(\hat G(A))$ can be written uniquely in the form $ \prod_{(r, s ) \in L} x_{r,s}^{\beta_{r,s}}$ where    $L \sub \omega \times \omega$ is a   set of pairs $(r, s \ra$ with $r<s$, $\lnot rAs$, and $0 < \beta_{r,s}  < p$.

 Every element of $\hat G(A)$ can be written uniquely in the form $c \cdot v$, where $v =  \prod_{i \in D} x_i^{\alpha_i}$,    $c \in Z(\hat G(A))$, $D \sub \omega$, and $0 < \alpha_i < p$ (the product is taken along  ascending indices).  

 \end{lemma}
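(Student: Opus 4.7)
The plan is to lift the normal form of Lemma~\ref{lem: NF G} from the finite quotients $G(A)/R_n$ to the inverse limit $\hat G(A)$. Recall that an element of $\hat G(A)$ is a coherent sequence $\rho = (\rho_n)_{n \in \omega}$ with $\rho_n \in G(A)/R_n$ and $\rho_{n+1} R_n = \rho_n$; since $\bigcap_n R_n = \{1\}$, the natural map $G(A) \hookrightarrow \hat G(A)$ is injective, so each $x_i$ and $x_{r,s}$ sits as a specific element of $\hat G(A)$.

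The key step is a projection formula. If $g \in G(A)$ has normal form $g = c \cdot v$ with $c = \prod_{(r,s) \in L} x_{r,s}^{\beta_{r,s}}$ and $v = \prod_{i \in D} x_i^{\alpha_i}$, then $gR_n$ is represented by the truncation obtained by restricting $L$ to pairs with $s < n$ and $D$ to indices $< n$, keeping the same exponents. This uses that, since $G(A)$ is nilpotent of class $2$ and of exponent $p$, the normal closure $R_n = \langle x_i : i \ge n\rangle^{G(A)}$ consists exactly of those elements whose Lemma~\ref{lem: NF G} normal form uses only indices $\ge n$, either in $D$ or in the second coordinate of $L$. Combined with Lemma~\ref{lem: NF G} applied to the finite group $G(A)/R_n$, this shows that the truncated data with $L \subseteq \{(r,s) : r < s < n,\, \neg rAs\}$ and $D \subseteq n$, together with matching exponents, forms a transversal for $G(A)/R_n$.

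Given $\rho \in \hat G(A)$, each $\rho_n$ therefore has a unique such normal form, with data $(L_n, D_n, \beta^{(n)}, \alpha^{(n)})$. The coherence condition $\rho_{n+1} R_n = \rho_n$, combined with the projection formula, forces $L_n = L_{n+1} \cap (\omega \times n)$, $D_n = D_{n+1} \cap n$, and agreement of the exponents on the common indices. Setting $L := \bigcup_n L_n$ and $D := \bigcup_n D_n$ and inheriting the exponents yields the claimed formal infinite product $c \cdot v$. Conversely, any such formal data determines, by truncation at each level $n$, a coherent sequence and hence an element of $\hat G(A)$. This bijective correspondence establishes existence and uniqueness of the second normal form.

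It remains to identify $Z(\hat G(A))$ with those formal products satisfying $D = \emptyset$, which gives the first normal form. If $D = \emptyset$ then each truncation $c_{|n}$ is a product of commutators and lies in $Z(G(A)/R_n)$, so $\rho$ commutes with the dense subgroup $G(A)$, hence with all of $\hat G(A)$ by continuity of the commutator. Conversely, if $D \neq \emptyset$, pick $n > \max D$; then $\rho_n$ has nontrivial image in the abelian quotient $(G(A)/R_n)/Z(G(A)/R_n)$, which is the $\mathbb{F}_p$-vector space freely generated by $\bar x_0, \ldots, \bar x_{n-1}$. Hence $\rho_n \notin Z(G(A)/R_n)$, and a witness of noncentrality in $G(A)/R_n$ lifts to one in $\hat G(A)$. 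The main obstacle is the projection formula itself, which requires tracking how commutator arithmetic in a nilpotent-2 exponent-$p$ group interacts with the normal closure $R_n$; once this is in place, the rest is routine inverse limit bookkeeping.
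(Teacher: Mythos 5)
Your treatment of the second normal form is correct and is essentially the paper's own route: identify the coset representatives of $G(A)/R_n$ with the truncated normal forms ($D\subseteq n$, pairs with second coordinate $<n$), observe that the coherence condition of the inverse limit only ever appends a factor $x_n^{\alpha_n}$ and factors $x_{r,n}$, and read off the formal infinite product. Your explicit description of $R_n$ (elements whose normal form uses only indices $\ge n$ in $D$ or in the second coordinates of $L$) and the resulting projection formula are the right way to substantiate the transversal claim that the paper only asserts, and they are correct in a nilpotent-2 exponent-$p$ group.

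There is, however, a genuine gap in your identification of $Z(\hat G(A))$, in the direction ``$D\neq\emptyset$ implies non-central''. First, ``pick $n>\max D$'' is undefined when $D$ is infinite. More seriously, the claim that $(G(A)/R_n)/Z(G(A)/R_n)$ is the $\mathbb F_p$-vector space freely generated by $\bar x_0,\dots,\bar x_{n-1}$ is false in general: $G(A)/R_n\cong G(A\restriction n)$, and if some $i<n$ is $A$-adjacent to every other $j<n$ then $\bar x_i$ is central in this finite quotient (e.g.\ $n=2$ with $0A1$ gives an abelian quotient), so the image of $\rho_n$ modulo the center can vanish even though $D\neq\emptyset$; this can happen for restrictions of nice graphs, since niceness is not inherited by finite induced subgraphs. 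The fix needs the graph to enter: for $d\in D$ choose $m\notin D$ with $\lnot dAm$ (such $m$ exists because in a nice graph no vertex is joined to all others — for an arbitrary graph with a dominating vertex $d$ the statement itself fails, as $x_d$ is then central in $\hat G(A)$ while its normal form has $D=\{d\}$), and either pass to a level $n>\max(d,m)$ where $\rho_n$ is visibly non-central, or argue directly in $\hat G(A)$ via the commutator formula of Lemma~\ref{lem:com}: $[\rho,x_m]$ contains the basis term $x_{d,m}^{\pm\alpha_d}\neq 1$ with no possible cancellation. With that repair (and noting, as the paper implicitly does, that the center statement is only needed and only true under such a hypothesis on $A$), your argument goes through; the remaining bookkeeping and the forward inclusion ($D=\emptyset$ implies central, by density and continuity) are fine.
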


We can define   the infinite products above explicitly as limits in $\hat G(A)$. We view $G(A)$ as embedded into $\hat G(A)$. Given formal products as above, let \bc $v_m =\prod_{i \in D\cap m} x_i^{\alpha_i}$   and  $c_m = \prod_{(r, s) \in L \cap m \times m} x_{r,s}^{\beta_{rs}}$. \ec
For $k \ge n$ we have  $v_k^{-1}  v_n \in  {R_n}$ and $c_k^{-1} c_n \in {R_n}$. So   $v = \lim_n  v_n $ and $c = \lim_n c_n $ exist in $\hat G(A)$ and equal the values of the formal products as defined above. 
 
Each nilpotent-2 group satisfies the distributive law $[x,yz] = [x,y][x,z]$.  This implies that
$[x_r^\alpha, x_s^\beta] = x_{r,s}^{\alpha \beta}$. The following lemma generalises to infinite products the expression for commutators  that were obtained using these identities in   \cite[p.\ 784]{Mekler:81} (and also in \cite[proof of Lemma A.3.4]{Hodges:93}).

\begin{lemma}[Commutators] \label{lem:com} Let $D, E \sub \omega$.  The following holds in $\hat G(A)$.  \[[\pp \alpha r D,  \pp \beta s E] = \prod_{r \in D, \, s \in E, \,  r< s, \, \lnot rs} x_{r,s}^{\alpha_r \beta_s- \alpha_s \beta_r}\]\end{lemma}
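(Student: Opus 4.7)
The plan is to reduce the infinite-product identity to the finite case, which is essentially Mekler's calculation \cite[p.~784]{Mekler:81}, and then transfer along the continuous embedding $G(A) \hookrightarrow \hat G(A)$ by passing to limits.

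First I would treat the finite case. In any nilpotent-$2$ group the standard identities $[xy,z]=[x,z][y,z]$ and $[x,yz]=[x,y][x,z]$ hold, and commutators are central; combined with $[x^\alpha,y^\beta]=[x,y]^{\alpha\beta}$ they give, for finite $D_0, E_0 \subseteq \omega$,
\[
\bigl[\, \prod_{r \in D_0} x_r^{\alpha_r}, \prod_{s \in E_0} x_s^{\beta_s}\,\bigr] \;=\; \prod_{r \in D_0,\, s \in E_0} [x_r, x_s]^{\alpha_r \beta_s}.
\]
Diagonal terms vanish because $[x_r, x_r]=1$. Adopting the convention $\alpha_i = 0$ for $i \notin D$ and $\beta_j = 0$ for $j \notin E$, one pairs $(r,s)$ with $(s,r)$ for $r<s$ using $x_{s,r}=x_{r,s}^{-1}$, obtaining exponent $\alpha_r\beta_s-\alpha_s\beta_r$. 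Finally, in $G(A)$ one has $x_{r,s}=1$ whenever $rAs$, so those factors drop out, leaving the desired formula restricted to pairs with $\lnot rs$.

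Next I would pass to the completion. Set $v_n = \prod_{i \in D \cap n} x_i^{\alpha_i}$ and $w_n = \prod_{j \in E \cap n} x_j^{\beta_j}$, so that $v = \lim_n v_n$ and $w = \lim_n w_n$ in $\hat G(A)$ by the remark preceding the lemma. Since the commutator map $(x,y)\mapsto[x,y]$ is continuous in any topological group, $[v,w]=\lim_n [v_n,w_n]$. By the finite case applied in $G(A)\subseteq \hat G(A)$,
\[
[v_n, w_n] \;=\; \prod_{r \in D \cap n,\, s \in E \cap n,\, r<s,\, \lnot rs} x_{r,s}^{\alpha_r\beta_s - \alpha_s\beta_r}.
\]
To identify $\lim_n [v_n,w_n]$ with the formal infinite product on the right-hand side of the lemma, I would argue as in the normal-form discussion: any factor $x_{r,s}$ with $r<s$ and $s\ge n$ lies in $R_n$ (since $R_n$ is the normal closure of $\{x_i : i \ge n\}$ and $x_{r,s}=[x_r,x_s]$), so the partial products stabilise modulo $R_n$ once $n$ is large, and both sides have the same image in $\hat G(A)/R_n$ for every $n$. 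By the definition of the completion as $\varprojlim G(A)/R_n$, this forces equality.

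The main technical point, and the only nonroutine one, is the last convergence step: one must verify that the partial products appearing on the two sides agree modulo $R_n$, which amounts to checking which factors $x_{r,s}$ already lie in $R_n$. This is a bookkeeping exercise about $R_n$ (namely, $x_{r,s}\in R_n$ whenever $\max(r,s)\ge n$) and falls out immediately from the normal form for $\hat G(A)$. The remainder is either Mekler's finite calculation or continuity of group operations in $\hat G(A)$.
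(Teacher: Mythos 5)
Your proposal is correct and follows essentially the same route as the paper: Mekler's finite-product identity in nilpotent-$2$ exponent-$p$ groups, continuity of the commutator map in $\hat G(A)$, and identification of the limit of the partial products with the formal infinite product via stabilisation modulo $R_n$ (which is exactly the content of the paper's preceding remark defining the infinite products as limits). Your extra bookkeeping that $x_{r,s}\in R_n$ when $s\ge n$ just spells out what the paper leaves implicit.
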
  

\begin{proof} Based on the  case of finite products, by continuity of the commutator operation and  using the expressions for limits above, we have 
\begin{eqnarray*} [\pp \alpha r D,  \pp \beta s E] &=& [\lim_n \pp \alpha r {D \cap n},  \lim_n \pp \beta s {E\cap n}] \\ 
&=&  \lim_n \prod_{r \in D, \, s \in E, \,  r< s< n, \, \lnot rs} x_{r,s}^{\alpha_r \beta_s- \alpha_s \beta_r} \\ 
&=& \prod_{r \in D, \, s \in E, \,  r< s, \, \lnot rs} x_{r,s}^{\alpha_r \beta_s- \alpha_s \beta_r}.\end{eqnarray*}
\end{proof}
Let	$C(g)$ denote the  centraliser of a group element $g$. The following is a direct consequence of Lemma~\ref{lem:com}.   
 \begin{lemma} \label{lem:centr} Let $  w \in \hat G(A) $. If  $0<  \gamma <p$ we have $C(  w^\gamma)= C(  w)$. \end{lemma}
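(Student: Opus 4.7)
The inclusion $C(w) \subseteq C(w^\gamma)$ is immediate, so the substance is to show $C(w^\gamma) \subseteq C(w)$. My plan is to establish the identity $[y, w^\gamma] = [y, w]^\gamma$ for every $y \in \hat G(A)$ and then exploit that $\gamma$ is invertible modulo $p$.

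First I would note that $\hat G(A)$, being an inverse limit of the nilpotent-2 exponent-$p$ quotients $G(A)/R_n$, is itself nilpotent-2 of exponent $p$; in particular every commutator $[y,w]$ lies in $Z(\hat G(A))$. The nilpotent-2 identity $[y, ab] = [y,a][y,b]$ (a consequence of $[[x,y],z]=1$) then yields $[y, w^\gamma] = [y,w]^\gamma$ by induction on $\gamma$. The normal form lemma for $\hat G(A)$ exhibits $Z(\hat G(A))$ as an abelian group of exponent $p$, i.e.\ an $\mathbb{F}_p$-vector space; on such a group, multiplication by $\gamma$ is a bijection since $0 < \gamma < p$. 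Hence $[y, w^\gamma] = 1$ forces $[y, w] = 1$, which is the desired inclusion.

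Alternatively, and closer to the phrasing of the hint, one can argue directly from Lemma~\ref{lem:com}. Writing $y = c_y \cdot \prod_j x_j^{\beta_j}$ and $w = c \cdot \prod_i x_i^{\alpha_i}$ in normal form, one checks that $w^\gamma$ equals $\prod_i x_i^{\gamma \alpha_i}$ up to a central factor, which is irrelevant for commutators. Lemma~\ref{lem:com} then gives
\[ [y, w^\gamma] \;=\; \prod_{j<i,\,\lnot ji} x_{j,i}^{\gamma(\beta_j \alpha_i - \beta_i \alpha_j)}, \]
and by uniqueness of the normal form in $Z(\hat G(A))$ together with invertibility of $\gamma$ modulo $p$, this vanishes iff all $\beta_j\alpha_i - \beta_i\alpha_j \equiv 0 \pmod p$, iff $[y,w]=1$. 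I expect the only mild subtlety to be verifying that taking $\gamma$-th powers of formal infinite products behaves as expected in $\hat G(A)$; this should follow routinely from continuity of the group operations together with the finite case.
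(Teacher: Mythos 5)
Your proposal is correct, and your second argument is essentially the paper's approach: the paper gives no written proof, stating only that the lemma is a direct consequence of Lemma~\ref{lem:com}, and your computation (reducing to $w$ modulo its central part, applying the commutator formula with exponents multiplied by $\gamma$, and using uniqueness of the normal form plus invertibility of $\gamma$ mod $p$) is exactly that intended deduction spelled out. Your first argument is a nice, slightly more general alternative: since $\hat G(A)$ is a closed subgroup of a product of nilpotent-2 exponent-$p$ groups it inherits the laws $x^p=1$ and $[[x,y],z]=1$, so $[y,w^\gamma]=[y,w]^\gamma$ and exponent $p$ give the result without invoking the normal form or Lemma~\ref{lem:com} at all.
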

Mekler's  argument employs the niceness of $A$ to show that a  copy of the set of vertices  of the given graph is first-order definable in $G(A)$. The copy of  vertex~$i$ is a certain definable equivalence class of the generator~$x_i$. 
 He provides  a first-order  interpretation $\Gamma$ without parameters such that $\Gamma(G(A)) \cong A$. We will show  that his interpretation has the same effect  in the profinite case: $\Gamma(\hat G(A)) \cong A$.

 We first summarize Mekler's interpretation $\Gamma$. Let $H$ be a group with centre  $Z(H)$. \bi \item For $a \in H$ let $\bar a$ denote the  coset $a Z(H)$.  
 \item Write $\bar a \sim \bar b$ if $C(a)= C(b)$. Let $[\bar a]$ be the $\sim$ equivalence class of $\bar a$.  (Thus, for $c \in Z(H)$ we have $[\bar c] = \{ Z(H)\}$.) \ei

Given a group $H$ we define the vertex set of the graph $\Gamma(H)$ to
be
 \[\Gamma(H) = \{ [ \bar a] \colon \, a\in H\setminus Z(H) \mbox{ and } |[\bar a]| = p-1    \mbox{ and } \] 
 \[ \ \ \ \  \ex  w\in H\setminus Z(H) \mbox{ such that } [\bar w] \neq  [ \bar  a ]\mbox{ and }[a,w]=1  
 \}\]

and the edge relation $R$ is given by  
 \[[\bar a ] R [\bar b] \mbox{ if }  [\bar a]\neq [\bar b] \mbox{ and } [a,b]=1.\]  
Note that the vertex set is interpretable and the edge relation definable in $H$.

We are ready to verify that a nice graph can be recovered from its associated profinite group via the interpretation $\Gamma$. 
  \begin{lemma} \label{lem:recover} For a nice graph $A$, we have $\Gamma(\hat G(A)) \cong A$ via $[\ol {x_i}] \mapsto i$.   \end{lemma}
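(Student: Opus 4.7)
The plan is to adapt Mekler's recovery argument from $G(A)$ to the profinite completion $\hat G(A)$, using the commutator formula of Lemma~\ref{lem:com} and the normal form for $\hat G(A)$. Since Lemma~\ref{lem:com} has the same shape in $\hat G(A)$ as in $G(A)$, all of Mekler's centralizer calculations carry over verbatim once we allow formal infinite products as coefficients; the uniqueness in the normal form for $Z(\hat G(A))$ ensures that distinct commutator components are independent.

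First I would verify well-definedness. By the normal form, $x_i \notin Z(\hat G(A))$, and Lemma~\ref{lem:com} identifies $C(x_i)$ modulo $Z$ with the closed subgroup generated by $\{x_j : j \in \{i\} \cup N_A(i)\}$. Lemma~\ref{lem:centr} then places the $p-1$ distinct cosets $\ol{x_i^\alpha}$ (for $0 < \alpha < p$) inside $[\ol{x_i}]$, and niceness of $A$ gives a neighbor $j$ of $i$, so $w = x_j$ supplies the required commuting witness with $[\ol w] \neq [\ol{x_i}]$ (the inequality coming from the injectivity step). For injectivity, given $i \neq j$, niceness supplies $z \neq i, j$ with $zAi$ and $\lnot zAj$, whence $x_z \in C(x_i) \setminus C(x_j)$. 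Edge preservation is immediate: $[x_i, x_j] = 1$ in $\hat G(A)$ iff $iAj$, from the commutator normal form.

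The main step, and the main obstacle, is surjectivity onto the vertex set. Given $[\ol a] \in \Gamma(\hat G(A))$, I write $a = cv$ in normal form with $v = \prod_{d \in D} x_d^{\alpha_d}$; since $C(a) = C(v)$, I may work with $v$ and aim to show $|D| = 1$. Following Mekler's case analysis via Lemma~\ref{lem:com}, I would rule out $|D| \geq 2$ by three subcases:
\begin{itemize}
\item[(a)] $D$ contains an $A$-edge $\{i, j\}$: by no triangles, $N_A(i) \cap N_A(j) = \emptyset$, and suitable variation of the coefficients of $x_i$ and $x_j$ produces $\geq (p-1)^2 > p-1$ cosets in $[\ol a]$, violating the size condition.
\item[(b)] $D$ is $A$-independent with a common neighbor $k$: inserting factors $x_k^\delta$ preserves $C(v)$, giving $|[\ol a]| \geq p(p-1)$.
\item[(c)] $D$ is $A$-independent with no common neighbor: then $C(v)/Z$ is one-dimensional, spanned by $v$, so every element of $C(v) \setminus Z$ lies in $[\ol a]$ and no commuting companion exists outside $[\ol a]$.
\end{itemize}
Each case relies on a distinct feature of nice graphs (no triangles, the no-squares bound $|N_A(i) \cap N_A(j)| \leq 1$, and the distinguishing property). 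The main delicacy in the profinite setting is that $v$ and the test elements may have infinite support, but the uniqueness in the normal form for $Z(\hat G(A))$ guarantees that nontrivial contributions to different commutator pairs cannot cancel, reducing each subcase to a single pair $(r, s)$. Once $|D| = 1$, we have $v = x_i^{\alpha_i}$ for a unique $i$, and $[\ol a] = [\ol{x_i}]$.
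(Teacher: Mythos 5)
The overall strategy (reduce to a case analysis on the support $D$ of the normal form of $v$, using Lemma~\ref{lem:com}) is the right one and matches the paper, but your case (a) contains a genuine error, and it is not a repairable detail but a wrong claim. If $D$ contains an $A$-edge $\{i,j\}$ but $|D|\ge 3$, varying the exponents of $x_i$ and $x_j$ does \emph{not} preserve the centraliser. Concretely, take $D=\{i,j,k\}$ with $iAj$, $\lnot iAk$, $\lnot jAk$, and $v=x_ix_jx_k$. By Lemma~\ref{lem:com}, $[v,w]=1$ forces $\beta_i=\beta_j=\beta_k$ for the exponents of $w$ on $D$, whereas $[x_i^2x_jx_k,w]=1$ forces $\beta_i=2\beta_k$, $\beta_j=\beta_k$; in particular $v$ itself centralises $v$ but not $x_i^2x_jx_k$, so $\overline{x_i^2x_jx_k}\not\sim\bar v$. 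In this situation $[\bar v]$ has exactly $p-1$ elements (it is $\{\bar v^\gamma : 0<\gamma<p\}$), so the size condition does \emph{not} exclude $v$; it must be excluded because no $w\notin Z$ with $[v,w]=1$ lies outside $[\bar v]$ -- and your proposal never argues the witness condition in case (a). The correct dichotomy, as in the paper, is: the $(p-1)^2$ count applies only when $D$ is \emph{exactly} a joined pair (Case 2); when $|D|\ge 3$, one must instead distinguish whether some vertex $\ell$ (possibly lying in $D$ itself) is joined to every other element of $D$ -- giving the $p(p-1)$ count (Case 3) -- or not, in which case one shows by an induction along a suitably chosen ordering of $D$ that $C(w)=C(v)$ forces $\bar w=\bar v^\gamma$ and that no commuting witness exists (Case 4). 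Your cases (b) and (c) roughly correspond to Cases 3 and 4 but only under the extra hypothesis that $D$ is independent, so nothing in your analysis picks up the configurations where $D$ contains an edge and has more than two elements.

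Two smaller gaps: for membership of $[\ol{x_i}]$ in the vertex set you only argue $|[\ol{x_i}]|\ge p-1$; one must also show the upper bound, i.e.\ that any $u$ with $C(u)=C(x_i)$ has support $\{i\}$, which is where the ``distinguishing vertex'' clause of niceness enters (the paper's Case 1). And in case (c) the statement that $C(v)/Z$ is spanned by $\bar v$ is precisely the nontrivial content of the paper's Case 4 (an induction using no triangles, no squares, and the failure of the common-neighbour condition, arranged along an order of type $\omega$ so that it also works for infinite supports in $\hat G(A)$); asserting it without that argument leaves the main work undone.
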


We  follow the outline of  the  proof of \cite[Lemma 2.2]{Mekler:81}. The notation there will be adapted  to $\hat G(A)$ via allowing infinite products. 

\begin{convention} Henceforth,   in products of the  form $\pp \alpha i D$   we will assume that $D\sub \omega$ is nonempty and $0 < \alpha_i < p$ for each $i \in D$. We also set $\alpha_i=0$ for $i \not \in D$. Expressions and  calculations involving number exponents $\alpha$  will all be modulo $p$; e.g.\ $\alpha \neq 0$ means that $\alpha \not \equiv 0 \mod p$.  
\end{convention}

Clearly the edge relation is the only possible one, so it suffices to show that 
  \[\Gamma(\hat G(A))=\{[\bar x_i] \colon i \in \omega \}.\]
  
Now let $v \not \in Z(\hat G(A))$. We may assume that    $  v= \pp \alpha  i D$ satisfying  the conventions above.   We will show that   \bc $ [\bar v] = [\bar x_e]$ for some $e$ if and only if $|[\bar v]| = p-1 \mbox{ and } \ex \bar w  \,  [ \bar  v ] R  [\bar w]$.  \ec

 We distinguish four cases:  in Case 1 we check that $v$ satisfies the two  conditions on the right hand side, in Case 2-4 that it  fails. 
 Recall that $rs$ is short for $rA s$, i.e.\ that vertices $r,s$ are joined. 

 \subsection*{Case 1:   {\rm $D = \{ r \}$ for some $r$}} \
 
Note that we have $\bar v= \bar x_r^\alpha$ for some $\alpha \neq 0$. Suppose $\bar u \sim \bar v$ for $  u = \pp \beta k E $. Then $E = \{r\}$. For,  if $k \in E$, $k \neq r$, then by niceness of $A$ there is an  $s$ such that $rs  \lland \lnot   ks$,  so that $  x_s \in C(v) \setminus C(u)$ by  Lemma \ref{lem:com}. Thus $\bar u = \bar x_r^\beta$ with    $0< \beta < p$ and hence $|[\bar v]|=p-1$.

For the second condition, by niceness of $A $ we   pick $i \neq r$ such that $ir$, and let $\bar w = \bar x_i$.
   \subsection*{Case 2: {\rm   $D = \{r, s\}$ for some $r,s$  such that   $rAs$ (and hence $r \neq s$)}}  
   
   \
   
 \n  We show that   $[\bar{ v}] = \{ \bar x_r^\alpha \bar x_s^\beta \colon 0< \alpha, \beta < p\}$, and hence this equivalence class has size $(p-1)^2$. To this end we verify:

    \n \emph{Claim.}    Let $w= \pp  \beta k E$. Let  $\alpha, \beta \neq 0$. We have \bc  $[w, \bar x_r^\alpha \bar  x_s^{\beta} ]= 1 \lra E \sub \{r,s\}$.  \ec
  For the implication from left to right,   if there is $k \in E\setminus \{r,s\}$, then $kr $ and $ks$, so   $A$ has a triangle.  Hence $E \sub \{r,s\}$.
  The converse implication follows by distributivity  and since $[x_r,x_s]=1$. This shows the claim.  
  
  By the claim, $C(\bar x_r^\alpha \bar  x_s^{\beta})= C(v)$ for each $\alpha, \beta \neq 0$. On the other hand, if $K \subset  \{r,s\}$ then $C(\pp \beta  i K)  \not \sub C(v)$ by niceness, and if  $K \not \sub   \{r,s\}$ then $v \not \in C(\pp \beta  i K)$ by the claim again.

   
   \subsection*{Case 3:  {\rm  Neither Case 1 nor 2, and there is an $\ell$ such that $i\ell$ for each $i \in D$.} }  \
 
  \n \emph{Claim.}     $[\bar{ v}] = \{ \bar v^\gamma \bar x_\ell^\beta \colon 0 < \gamma < p \lland 0\le   \beta < p\}$, and hence  $|[\bar{ v}]|= p(p-1)$.

\n First an observation: \emph{suppose that $[v^\gamma  x_\ell^\beta,w]=1$ where  $\gamma, \beta$ are as above and $w= \pp \beta k E$. Then  $E \sub D \cup \si \ell$.} For, since   Cases 1 and  2 don't apply, there are distinct $p,q \in D \setminus \si \ell$. Since $A$ has no squares,  $\ell$ is the only vertex adjacent to both $p$ and $q$. Hence, given  $j \in E \setminus \{ \ell,p,q\}$, we have $\lnot pj \lor \lnot qj$, say the former. Then $j \in D$, for otherwise, in Lemma~\ref{lem:com}, in the expansion of $[v^\gamma x_\ell^\beta, w]$, we get a term $x_{p,j}^{m}$ with  $m \neq 0$ and $\lnot pj$ (assuming that $p< j$, say).

  The inclusion  ``$\supseteq$" of the claim now follows  by Lemma~\ref{lem:com}.
    %
    For the inclusion ``$\sub$"    suppose that $[v,w]=1$ where $w= \pp \beta k E$. Then  $E \sub D \cup \si \ell$ by our observation.       
  By Lemma~\ref{lem:com} and the case hypothesis we now have 
  \bc $[v,w] = \prod_{r,s \in D \setminus \si \ell, \,  r< s, \, \lnot rs} x_{r,s}^{\alpha_r \beta_s- \alpha_s \beta_r}$. \ec
Let  $m = \min (D \setminus \si \ell)$. Since $\alpha_m \neq 0$ we can pick $\gamma$ such that $\beta_m= \gamma \alpha_m$. Since $A$ has no  triangles we have $\lnot rs$ for any $r< s$ such that  $r, s \in D \setminus \si \ell$, and hence $\alpha_r \beta_s = \alpha_s \beta_r$.  By induction on the elements $s$ of $D \setminus \si \ell$ this yields $\beta_s= \gamma \alpha_s $: if we have it for some  $r<s$ in $D \setminus \si \ell$ then $\beta_r \beta_s = \gamma \alpha_r\beta_s = \gamma \alpha_s \beta_r$.  Hence 
  $\beta_s= \gamma \alpha_s $, for if $\beta_r\neq 0$ we can cancel it, and if $\beta_r = 0$ then also $\beta_s=0$ because $\alpha_r \neq 0$. 
  
  This shows that $\bar w=  \bar v^\gamma \bar x_\ell^\beta$ for some $\beta$. In particular, $C(v)= C(w)$ implies that $\bar w$ has the required form.

     \subsection*{Case 4: {\rm   Neither Case 1, 2 or 3}} \
  
  \n \emph{Claim.}  $[\bar{ v}] = \{ \bar v^\gamma   \colon 0 < \gamma < p\}$, so this class  has $p-1$ elements.  Further, there is no $\bar w$ such that $[\bar{ v}] R [\bar{ w}]$.
  
 The inclusion ``$\supseteq$" of the first statement follows from Lemma~\ref{lem:centr}. We now verify the converse inclusion and the second statement. By case hypothesis  there are distinct $\ell_0, \ell_1\in D$ such that $\lnot \ell_0 \ell_1$. Since $A$ has no squares  there is at most one $q \in D$ such that $\ell_0q \lland \ell_1 q$.  If $q$ exists, as we are not in Case 3 we can choose $q' \in D$ such that $\lnot q' q$.  
 
 We define a linear order $\prec$ on  $D$, which is  of type $\omega$ if $D$ is infinite. It begins  with  $\ell_0, \ell_1$, and is followed by $q', q$ if they are defined. After that we proceed in ascending order for the remaining elements of $D$. Then  for each $v \in D \setminus \si {\min D}$ there is $u \prec v$ in $D$ (in fact among the first three elements)  such that $\lnot uv$.
  
  Suppose now that $[v,w]=1$ where $w= \pp \beta k E$. Then $E \sub D$: if $s \in E \setminus D$ there is $r \in D$ such that  $\lnot rs$. This implies $\alpha_r \beta_s= \alpha_s \beta_r$, but $\alpha_s= 0$ while the left hand side is $\neq 0$, contradiction.
  
  By a slight variant of Lemma~\ref{lem:com}, using that $\prec$ eventually agrees with $<$,   we now have $[v,w] = \prod_{r,s \in D ,  \,  r\prec  s, \, \lnot rs} x_{r,s}^{\alpha_r \beta_s- \alpha_s \beta_r}$. Choose $\gamma$ such that $\gamma \alpha_{\ell_0} = \beta_{\ell_0}$. By induction along $(D, \prec)$ we see that $\beta_s = \gamma \alpha_s$ for each $s\in D$: if $\ell_0 \prec s$ choose $r \prec s$ such that $\lnot rs$.  Since $\alpha_r \beta_s= \alpha_s \beta_r$,  as in Case 3  above we may conclude that $\beta_s = \gamma \alpha_s$.
  
  This shows that $\bar v^\gamma = \bar w$. 
   Further, if   $[\bar w] \neq [\bar 1]$   then $\gamma \neq 0$ so that  $[\bar v] = [\bar w]$, as required. 
   
     \begin{lemma} For nice graphs $A, B$  we have  $A \cong B$ iff $\hat G(A) \cong \hat G(B)$. 
 \end{lemma}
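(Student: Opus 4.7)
My plan is to split the proof into the two directions and reduce each to machinery already developed in the excerpt.

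For the forward direction, I would start from a graph isomorphism $\pi \colon A \to B$, viewed as a permutation of $\omega$. The assignment $x_i \mapsto x_{\pi(i)}$ extends to an abstract group isomorphism $\phi \colon G(A) \to G(B)$, since the defining relators $x_{r,s}$ with $rAs$ are sent bijectively to the defining relators $x_{\pi(r),\pi(s)}$ with $\pi(r) B \pi(s)$. I would then show that $\phi$ is a homeomorphism when $G(A)$ and $G(B)$ are equipped with the topologies induced by the $R_n$'s, by checking that the image filter $\{\phi(R_n)\}$ is equivalent to $\{R_n^{G(B)}\}$: given $n$, pick $m$ large enough that $\{0,\ldots,n-1\} \subseteq \{\pi(0),\ldots,\pi(m-1)\}$, so that every generator $x_j$ with $j \geq n$ lies in $\phi(R_m)$; the symmetric argument uses $\pi^{-1}$. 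By the equivalence of completions noted in Subsection~\ref{ss:completion}, $\phi$ extends continuously to a topological isomorphism $\hat G(A) \cong \hat G(B)$.

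For the backward direction, I would invoke Lemma~\ref{lem:recover} and the fact that $\Gamma$ is a first-order interpretation without parameters. Any topological (indeed, any abstract) group isomorphism $\Phi \colon \hat G(A) \to \hat G(B)$ preserves the formulas defining the universe and edge relation of $\Gamma$, so it descends to an isomorphism $\Gamma(\hat G(A)) \cong \Gamma(\hat G(B))$. Composing with the isomorphisms $A \cong \Gamma(\hat G(A))$ and $\Gamma(\hat G(B)) \cong B$ from Lemma~\ref{lem:recover} yields $A \cong B$.

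The two directions are each quite short; neither requires a calculation of the flavor seen in Cases 1--4 of the proof of Lemma~\ref{lem:recover}, since the hard work has already been done. The main obstacle, such as it is, lies in the forward direction: one must be careful that the free-group-style description of $\phi$ really does extend to the completion, which comes down to verifying the filter equivalence $\{\phi(R_n)\} \sim \{R_n^{G(B)}\}$ in $G(B)$. Once that is in hand the result is immediate.
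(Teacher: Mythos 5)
Your proof is correct and takes essentially the same route as the paper: the forward direction maps generators along the graph isomorphism and verifies that the two neighbourhood filters are equivalent (the step the paper dismisses with ``clearly''), and the backward direction composes Lemma~\ref{lem:recover} with the fact that the parameter-free interpretation $\Gamma$ is preserved by any group isomorphism. One small slip in the forward direction: from $\{0,\dots,n-1\}\subseteq\{\pi(0),\dots,\pi(m-1)\}$ what actually follows is $\phi(R_m)\subseteq R_n^{G(B)}$, not that every $x_j$ with $j\ge n$ lies in $\phi(R_m)$ --- but that is precisely the inclusion needed for continuity, and the symmetric argument with $\pi^{-1}$ supplies the other one, so the argument stands.
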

 For the forward  implication, suppose that $ A \cong B $ via a permutation $\rho \in \Sinf$. Then  $G(A) \cong G(B)$ via the isomorphism induced by  viewing $\rho$  as a permutation of generators.  Clearly $\+ V = \{ R_n \colon  n \in \NN\}$ and $\{ \rho(R_n)  \colon  n \in \NN\}$ are equivalent  bases of neighbourhoods of $1$, So by the remark at then end of  Subsection~\ref{ss:completion} the completions are homeomorphic. 
 
 The backward implication is immediate from Lemma~\ref{lem:recover}.

\end{proof}

It is important in the above argument to allow infinitely generated groups: For profinite groups that are (topologically) finitely generated, the isomorphism relation is smooth, i.e., Borel reducible to the identity relation on $\RR$. To see this, we rely  on the result that two finitely generated profinite groups are isomorphic iff they have the same finite quotients (up to isomorphism); see \cite[Prop.\ 16.10.7]{Fried.Jarden:06}. Note that to a finitely generated profinite group, one  can in a  Borel  way assign the set of isomorphism types of  its finite quotients.  For an alternative proof not relying on \cite[Prop.\ 16.10.7]{Fried.Jarden:06} see the preprint  \cite[Thm.\ 3.1]{Nies:16}. Also see \cite{Nies:16} for the observation that  identity on $\RR$ is Borel reducible  to isomorphism of finitely generated profinite groups.

\def\cprime{$'$} \def\cprime{$'$}

%

\begin{thebibliography}{10}

\bibitem{Ahlbrandt.Ziegler:86}
G.~Ahlbrandt and M.~Ziegler.
\newblock Quasi finitely axiomatizable totally categorical theories.
\newblock {\em Annals of Pure and Applied Logic}, 30(1):63--82, 1986.

\bibitem{Becker.Kechris:96}
H.~Becker and A.~Kechris.
\newblock {\em The descriptive set theory of Polish group actions}, volume 232.
\newblock Cambridge University Press, 1996.

\bibitem{Fried.Jarden:06}
M.~Fried and M.~Jarden.
\newblock {\em Field arithmetic}, volume~11.
\newblock Springer Science \& Business Media, 2006.

\bibitem{Gao:09}
Su~Gao.
\newblock {\em Invariant descriptive set theory}, volume 293 of {\em Pure and
  Applied Mathematics (Boca Raton)}.
\newblock CRC Press, Boca Raton, FL, 2009.

\bibitem{Hodges:93}
W.~Hodges.
\newblock {\em Model Theory}.
\newblock Encyclopedia of Mathematics. Cambridge University Press, Cambridge,
  1993.

\bibitem{Kechris:95}
A.~S. Kechris.
\newblock {\em Classical descriptive set theory}, volume 156.
\newblock Springer-Verlag New York, 1995.

\bibitem{Mekler:81}
A.~Mekler.
\newblock Stability of nilpotent groups of class 2 and prime exponent.
\newblock {\em The Journal of Symbolic Logic}, 46(04):781--788, 1981.

\bibitem{Nies:16}
A.~Nies.
\newblock The complexity of isomorphism between countably based profinite
  groups.
\newblock {\em arXiv preprint arXiv:1604.00609}, 2016.

\bibitem{Ribes.Zalesski:00}
L.~Ribes and P.~Zalesskii.
\newblock {\em Profinite groups}.
\newblock Springer, 2000.

\bibitem{Rosendal.Zielinski:arXiv}
C.~Rosendal and J.~Zielinski.
\newblock Compact metrizable structures and classification problems.
\newblock {\em arXiv preprint arXiv:1610.00370}, 2016.

\bibitem{Tsankov:12}
T.~Tsankov.
\newblock Unitary representations of oligomorphic groups.
\newblock {\em Geometric and Functional Analysis}, 22(2):528--555, 2012.

\end{thebibliography}
%

\end{document}